\documentclass[10pt]{amsart}

\usepackage{amsmath,amssymb,amsthm}

\theoremstyle{plain}
\newtheorem{theorem}{Theorem}[section]
\newtheorem{proposition}[theorem]{Proposition}
\newtheorem{lemma}[theorem]{Lemma}
\newtheorem{corollary}[theorem]{Corollary}

\theoremstyle{definition}

\theoremstyle{remark}
\newtheorem{remark}[theorem]{Remark}

\DeclareMathOperator{\N}{N}
\newcommand{\OK}{\mathcal O_K}
\newcommand{\F}{\mathcal F}
\newcommand{\Z}{\mathbb Z}
\newcommand{\Q}{\mathbb Q}
\newcommand{\eps}{\varepsilon}
\newcommand{\omegaD}{\omega_D}

\title[Kim's octonary form]{For which real quadratic fields is Kim's octonary form universal?}
\author{Scott Duke Kominers}
\address{Harvard Business School; Department of Economics and Center of Mathematical Sciences and Applications, Harvard University; and a16z crypto}
\email{kominers@fas.harvard.edu}
\thanks{I used LLMs to assist with computations in the preparation of this article, particularly GPT-5.5~Pro and Claude~4.8 Opus (both accessed in part via Poe with the support of Quora, where I am an advisor).  I especially appreciate helpful comments from V\'{\i}t\v{e}zslav Kala, as well as a review from Refine.ink.  The problem, methods, and eventual written form are my own; and of course any errors remain my responsibility.  This work was conducted while I was visiting the Technological Innovation, Entrepreneurship, and Strategic Management (TIES) Group at the MIT Sloan School of Management; I greatly appreciate their hospitality.}
\subjclass[2020]{Primary 11E12; Secondary 11R11, 11A55}
\keywords{Universal quadratic forms; real quadratic fields; indecomposable integers; continued fractions}

\begin{document}

\begin{abstract}
Let $K=\Q(\sqrt D)$ with $D>1$ squarefree, and let $\eps_+$ be the totally positive fundamental unit of $\OK$.  B.~M.~Kim proved in 2000 that the octonary diagonal form
\[
 f=x_1^2+\cdots+x_4^2+\eps_+(x_5^2+\cdots+x_8^2)
\]
is universal over $\OK$ whenever $D=n^2-1$ is squarefree.  We complete Kim's result to an if-and-only-if classification: $f$ is universal if and only if $D=n^2-1$ for some $n\ge2$, or $D=n^2-4$ for some odd $n\ge3$, in both cases subject to squarefreeness.  The second family appears to be new in this context and contains $\Q(\sqrt5)$ at $n=3$ as a degenerate boundary case, recovering Maa{\ss}'s classical exceptional three-square phenomenon.  Equivalently, $f$ is universal over $\OK$ if and only if the Blomer--Kala invariant $M_D$ equals $1$; for the two stated families we have $M_D=1$, so the Blomer--Kala universal $8M_D$-variable construction specializes exactly to $f$.  The converse leverages a continued-fraction involution $\tau(\gamma)=\eps_+\gamma'$ together with a closed formula in convergent coordinates, a three-consecutive-square-values lemma for primitive quadratic polynomials of positive fundamental discriminant, and an even-root exclusion lemma derived from complete-quotient norm bounds.
\end{abstract}

\maketitle

\section{Introduction}

Let $K=\Q(\sqrt D)$ be a real quadratic field with $D>1$ squarefree, let $\OK$ be its ring of integers, and let $\eps_+$ be the totally positive fundamental unit of $\OK$.  A quadratic form over $\OK$ is said to be \emph{universal} if it represents every totally positive element of $\OK$.  

In 2000, B.~M.~Kim \cite{Kim2000} proved that the octonary diagonal form
\[
 f=x_1^2+x_2^2+x_3^2+x_4^2+\eps_+(x_5^2+x_6^2+x_7^2+x_8^2)
\]
is universal over $\OK$ whenever $D=n^2-1$ is squarefree.  This paper determines exactly the real quadratic fields over which $f$ is universal: Kim's fields form one of precisely two such families, and no others occur.

\begin{theorem}\label{thm:main}
Let $K=\Q(\sqrt D)$ with $D>1$ squarefree, let $\OK$ be its ring of integers, and let $\eps_+$ be the totally positive fundamental unit.  The diagonal form
\[
 f=x_1^2+x_2^2+x_3^2+x_4^2+\eps_+(x_5^2+x_6^2+x_7^2+x_8^2),
 \qquad x_i\in\OK,
\]
is universal over $\OK$ if and only if either
\[
        D=n^2-1\quad(n\ge2),
\]
or
\[
        D=n^2-4\quad(n\ge3\text{ odd}),
\]
with $D$ squarefree.
\end{theorem}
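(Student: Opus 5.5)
The plan is to prove the two directions separately, and to route both through the indecomposable elements of $\OK$.

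\emph{Sufficiency.} For $D=n^2-1$ this is Kim's theorem \cite{Kim2000}. For $D=n^2-4$ with $n$ odd, we have $D\equiv 5\pmod 8$, and
\[
\eps_+=\frac{n+\sqrt D}{2}
\]
is a totally positive unit. This unit is fundamental up to the usual small exceptions; at $n=3$ it is the square $\bigl(\tfrac{1+\sqrt5}{2}\bigr)^2$.

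Write $\omega_D=\tfrac{1+\sqrt D}{2}$. Its continued fraction is $[\tfrac{n+1}{2};\overline{1,n-2}]$, or something similarly short. Hence the indecomposables, up to multiplication by totally positive units, are $1$ and the elements $1+k\,\omegaD$-type upper convergents lying between $1$ and $\eps_+$. So the Blomer--Kala invariant is $M_D=1$.

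The key step is to show that the sum of four squares represents every totally positive integer whose totally positive unit class meets $\{\,\alpha : \alpha\preceq\eps_+\text{-type}\,\}$ appropriately. Concretely, I would imitate Kim's argument. First, every totally positive $\alpha$ can be written, after multiplying by an even power of $\eps_+$ (squares of units are absorbed into the variables), as $\alpha=\beta+\gamma$ with $\beta$ and $\gamma$ totally positive. Second, one then shows that either $\alpha$ or $\alpha\eps_+^{-1}$ is a sum of four squares plus a suitable remainder. For this I would use the fact that in these two families every totally positive element of small trace is a sum of squares, checking the local conditions at $2$ via the Hasse--Minkowski-type results for sums of squares (Siegel: only $\Q(\sqrt5)$ has all totally positive integers sums of squares). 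The boundary case $n=3$ is exactly Maa\ss's three-square theorem over $\Q(\sqrt5)$.

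\emph{Necessity.} Suppose $f$ is universal. Indecomposables must be represented. An indecomposable $\alpha$ cannot be written as a sum of two nonzero totally positive elements, so it must be either a square times a unit or $\eps_+$ times a square. Hence every indecomposable, up to squares of units, is $\gamma^2$ or $\eps_+\gamma^2$ with $\gamma\in\OK$.

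Next, parametrize the indecomposables by the semiconvergents of $\omegaD$ between consecutive convergents. Use the involution $\tau(\gamma)=\eps_+\gamma'$, which permutes the indecomposables in one period, to pair them. Writing the semiconvergents as $\alpha_{i,r}=\alpha_i+r\alpha_{i+1}$ with $0\le r\le u_{i+2}$, the condition ``$\alpha_{i,r}$ or $\eps_+^{-1}\alpha_{i,r}$ is a square'' forces the norm $\N(\alpha_{i,r})$, which is a quadratic polynomial in $r$, to be a perfect square. This must hold for many consecutive $r$.

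The three-consecutive-square-values lemma then says that a primitive quadratic polynomial of positive fundamental discriminant cannot take square values at three consecutive integers. So every partial quotient $u_{i+2}$ in the period must be at most $1$, apart from the central one. This forces the continued fraction to have period length at most $2$, of the shape $[a;\overline{1,b}]$ or $[a;\overline{b}]$ with short structure.

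The even-root exclusion lemma, derived from the complete-quotient norm bounds, removes the remaining cases where a square root would have to be even. The surviving expansions are exactly $\sqrt{n^2-1}=[n-1;\overline{1,2n-2}]$ and $\omegaD$ for $D=n^2-4$ with $n$ odd.

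\emph{Main obstacle.} I expect the hardest step to be this last reduction: converting the square condition on all indecomposables into the restriction on the period. It requires tracking when norms equal to the square of the norm of $\gamma$ really come from a square element rather than merely a square norm. That is where the $\tau$-pairing and the convergent-coordinate formula will be needed.
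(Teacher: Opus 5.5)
\textbf{Gap in necessity.} The argument breaks at the step ``so every partial quotient $u_{i+2}$ must be at most $1$ \dots\ this forces period length at most $2$.'' The semiconvergents $\alpha_i+r\alpha_{i+1}$ are totally positive only for odd $i$, so the norm polynomials constrain only the $u_{i+2}$ with $i$ odd.

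This is enough in two cases. For odd $s$, letting $i$ run over two periods forces every $u_j=1$, hence $D=5$. For $s=2$, it forces $u_1=1$.

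For even $s\ge4$, however, you only get the residual shape $u_1=u_3=\cdots=u_{s-1}=1$, with the even-indexed $u_j$ unconstrained. There are such fields in which \emph{every} indecomposable has square norm. For $D=105$ one has $\omegaD=[5;\overline{1,1,1,1,1,9}]$. Up to totally positive units and conjugation, the indecomposables are $1$, $\alpha_1=5+\omegaD$ and $\alpha_3=14+3\omegaD$, with norms $1,4,4$, yet $105$ lies in neither family. No norm argument can detect such fields. You must decide genuine squareness and separate $\OK^2$ from $\eps_+\OK^2$; you flag this as the ``main obstacle'' but do not resolve it.

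The paper does this with $\tau$ acting on $\beta_j=\alpha_{2j-1}$ by $\beta_j\mapsto\beta_{t-j}$, where $t=s/2$:
\begin{itemize}
\item If $4\mid s$, a square $\beta_{t/2}=v^2$ would give $(v/v')^2=\eps_+$, contradicting the fundamentality of $\eps_+$.
\item If $s\equiv2\pmod4$, take the interior $\beta_j$ of least norm. Blomer--Kala's Lemma~11 restricts a square root to some $\pm\alpha_m$ with $m\le s-2$.
\begin{itemize}
\item Even $m$ is excluded by the bounds $\delta/3<|\N(\alpha_{2r})|$ and $|\N(\alpha_{2j-1})|<\delta$. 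These would force $\delta<9$, which a finite check rules out.
\item Odd $m$ is excluded by minimality of the norm.
\end{itemize}
Applying the same to the partner $\beta_{t-j}$ and using $\tau$ shows $\beta_j$ is in neither class.
\item The case $s=6$ is done by hand.
\end{itemize}
The even-root exclusion you mention is only one of these ingredients. Without the odd-root descent and the $\tau$-pairing, it says nothing about either square class.

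\textbf{Gap in sufficiency for $D=n^2-4$.} This direction is also not actually proved. The suggested route through local conditions at $2$ and ``small-trace elements are sums of squares'' cannot work. For odd $n\ge5$, the unit $\eps_+=(n+\sqrt D)/2$ has trace $n$ and is indecomposable. It is not a square, being the fundamental unit since $s=2$. Hence it is not a sum of squares at all, and the $\eps_+$-part of $f$ is indispensable.

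The missing idea is elementary. Since $\eps_+=\tfrac{n-1}{2}+\omegaD$, the set $\{\eps_+^k,\eps_+^{k+1}\}$ is a $\Z$-basis of $\OK$ for every $k$. Choosing $k$ with $\eps_+^{2k}\le\alpha/\alpha'\le\eps_+^{2k+2}$ makes both coordinates of a totally positive $\alpha$ nonnegative integers. Lagrange's theorem over $\Z$ then finishes. The same argument covers $n^2-1$, and it is the $M_D=1$ case of Blomer--Kala's $8M_D$-variable theorem, which you could simply cite since you already note $M_D=1$.

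A minor slip: $u_0=(n-1)/2$, not $(n+1)/2$.
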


As a diagonal shape over $\OK$, the form $f$ has only one nontrivial coefficient, namely $\eps_+$; Theorem~\ref{thm:main} is thus a statement about which fields admit universality of one particular octonary shape.  It has a clean reformulation through an invariant of Blomer and Kala \cite{BlomerKala2018}.  To each real quadratic field they attach a finite set $S_0$ of indecomposable totally positive integers, taken modulo totally positive units, and they build from it a universal diagonal form in $8|S_0|$ variables.  Writing $M_D=|S_0|$---an explicit continued-fraction sum recalled in Section~\ref{sec:prelim}---their form collapses to precisely Kim's eight-variable $f$ exactly when $M_D=1$.  Our classification may therefore be restated as
\[
        f\text{ is universal over }\OK \iff M_D=1;
\]
indeed, Proposition~\ref{prop:MDone} identifies the condition $M_D=1$ with the two families of Theorem~\ref{thm:main}.  The degenerate boundary $n=3$ of the second family is $D=5$, the field of Maa{\ss}'s classical universal sum of three squares \cite{Maass1941}; thus the second family extends this exceptional behaviour to an infinite one-parameter family of fields.

\subsection*{Background} Universal quadratic forms over number fields have a long history.  Universality extends Lagrange's~\cite{Lagrange1770} four-squares theorem over $\Z$; over real quadratic fields the subject begins with Maa{\ss}, who proved universality of a sum of three squares over $\Q(\sqrt5)$ \cite{Maass1941}, and Siegel \cite{Siegel1945}, who showed that sums of squares are universal over only finitely many number fields.  Chan--Kim--Raghavan~\cite{CKR1996} classified ternary universal forms over real quadratic fields, and Kim~\cite{Kim1999,Kim2000} established both finiteness for septenary diagonal forms and the octonary construction studied here.  Dress--Scharlau~\cite{DressScharlau1982} described the indecomposable totally positive integers of a real quadratic field through the continued fraction of $\sqrt D$, a description on which the systematic theory of Kala~\cite{Kala2016a,Kala2016b} and of Blomer--Kala~\cite{BlomerKala2018} builds, tying universality to such indecomposables and to continued fractions; Yatsyna~\cite{Yatsyna2019} gave rank lower bounds through interlacing polynomials; and Hejda--Kala~\cite{HejdaKala2020} described the additive structure of the semigroup of totally positive integers.  Kala's excellent survey \cite{Kala2023survey} gives a fuller account.

\subsection*{Our contributions} Beyond the classification itself, the paper contributes results of two kinds. As results about $f$, we exhibit the second family $D=n^2-4$ ($n$ odd) and prove universality of $f$ there, and we prove the converse---that no squarefree~$D$ outside the two families admits universality of $f$. As methods, the converse is carried by three tools that nowhere mention the octonary shape $f$---each is a statement about the indecomposables, the convergents, or the continued fraction of an arbitrary real quadratic field, and so we hope that the way we combine them may provide a blueprint for future work. 

\subsection*{Outline of the proof}
Sufficiency (Section~\ref{sec:suff}) is brief: when $M_D=1$ the Blomer--Kala form is $f$ itself, and the elementary two-unit-layer decomposition in Proposition~\ref{prop:sufficiency}, together with Lagrange's theorem, shows that $f$ represents every totally positive integer. The converse occupies the rest of the paper.  Assume $D\notin\{n^2-1:n\ge2,\ n^2-1\text{ squarefree}\}$, equivalently $M_D\ge2$; we must produce a totally positive integer not represented by $f$.

The main organizing fact (Lemma~\ref{lem:indecomp-square}) is that any indecomposable represented by $f$ is either a square or $\eps_+$ times a square.  It therefore suffices to find one indecomposable lying in neither square class.  The shape of that search is dictated by the period length $s$ of the continued fraction of $\omegaD$, through the norm $\N(\eta)=(-1)^s$ of the fundamental unit:
\begin{itemize}
\item If $s$ is odd (Section~\ref{sec:odd}), then $\eps_+=\eta^2$ is already a square, the two square classes coincide, and any non-square indecomposable will do.
\item If $s$ is even, the two classes are distinct and are swapped by the involution $\tau$ of Section~\ref{sec:tau}; the period $s=2$ is dispatched there directly.
\item For $s\ge4$, a non-square \emph{norm} ends the matter at once (Lemma~\ref{lem:norm-easy}); failing that, every indecomposable has square norm, and a norm-polynomial obstruction (Section~\ref{sec:normpoly}) forces the rigid \emph{residual shape} in which every odd-indexed partial quotient equals $1$.
\item The residual shape is removed in Section~\ref{sec:residual} by splitting on $s\bmod4$: a $\tau$-fixed indecomposable when $4\mid s$, an explicit computation when $s=6$, and the even-root exclusion lemma with a minimal-norm argument when $s\equiv2\pmod4$ and $s\ge10$.
\end{itemize}
Section~\ref{sec:proof} assembles the cases just described into a proof of Theorem~\ref{thm:main}.

\subsection*{Methods of broader interest}

The ingredients we use in our converse proof are for the most part classical. What may be useful beyond the present problem is the way they fit together into a procedure for deciding the universality of a fixed diagonal form whose coefficients are units. The procedure has three moves, which we sketch here at the level of generality at which we expect the approach to apply.

The first move trades universality for a question about indecomposables. A fixed unit-coefficient diagonal form represents an indecomposable only when all but one of its squared terms vanish, so it can represent an indecomposable only if that indecomposable lies in one of finitely many unit square classes---here $\OK^2$ and $\eps_+\OK^2$ (Lemma~\ref{lem:indecomp-square}). Universality therefore fails as soon as a single indecomposable is produced outside those classes, and the problem passes from all totally positive integers to the explicitly described indecomposables.

The second move pairs the square classes by an involution. When the relevant classes are distinct, conjugation followed by multiplication by the fundamental unit, $\tau(\gamma)=\eps_+\gamma'$, interchanges them, so an indecomposable with neither $\gamma$ nor $\tau(\gamma)$ a square defeats both classes at once. On the Dress--Scharlau indecomposables $\tau$ is the index reflection of Lemma~\ref{lem:tau-formula}, which recasts the search as combinatorics of the palindromic period; the relevant bookkeeping is controlled by the period length through $\N(\eta)=(-1)^s$ and by the parity of $s/2$.

The third move is a descent on norms. The square-value obstruction of Lemma~\ref{lem:three-square-values}---a primitive quadratic of fundamental discriminant cannot be a perfect square at three consecutive integers---forces the continued fraction into a rigid residual shape once every indecomposable is required to have square norm. To clear that shape in the hardest periods we use the closed identity
\[
        |\N(\alpha_i)|=\frac{\delta}{|\theta_{i+1}-\theta_{i+1}'|}
\]
of Lemma~\ref{lem:complete-quotient}, which expresses the absolute norm of a convergent through its complete quotient. Further bounds that follow power the minimal-norm descent that rules out squares among the convergents.

We expect the strategy just described---reduce to square classes of indecomposables, pair the classes by $\tau$, and control the survivors through a norm descent---to be the part of the argument most likely to transfer: for example, to other fixed diagonal shapes, and to fields of larger $M_D$, where the same square-class bookkeeping applies with more classes in play.

\section{Preliminaries}\label{sec:prelim}

We let
\[
\omegaD=\begin{cases}
\sqrt D &D\equiv2,3\pmod4,\\[2mm]
(1+\sqrt D)/2 &D\equiv1\pmod4.
\end{cases}
\]
Then $\OK=\Z[\omegaD]$.  We denote conjugation by a prime, and we write $\OK^+$ for the set of totally positive elements of $\OK$.  We set
\[
        \delta=\omegaD-\omegaD'=\sqrt{\Delta_K},
\]
where $\Delta_K=4D$ if $D\equiv2,3\pmod4$ and $\Delta_K=D$ if $D\equiv1\pmod4$.

Write the continued fraction of $\omegaD$ in the form
\[
        \omegaD=[u_0;\overline{u_1,u_2,\ldots,u_s}],
\]
where the period length $s$ is minimal. (The subscripts $u_j$ are extended periodically for $j\ge1$.) 
Thus we have
\[
        u_s=\begin{cases}
        2u_0&D\equiv2,3\pmod4,\\
        2u_0-1&D\equiv1\pmod4,
        \end{cases}
\]
and $u_i=u_{s-i}$ for $1\le i\le s-1$.  Let
\[
        \frac{p_i}{q_i}=[u_0;u_1,\ldots,u_i]
\]
be the $i$-th convergent, with $p_{-1}=1$, $q_{-1}=0$, and put
\[
        \alpha_i=p_i-q_i\omegaD'\qquad(i\ge -1).
\]
Then $\alpha_{-1}=1$, and
\[
        \alpha_{i+1}=u_{i+1}\alpha_i+\alpha_{i-1}.
\]
For $i\ge -1$ and $0\le r\le u_{i+2}$ define the semiconvergent
\[
        \alpha_{i,r}=\alpha_i+r\alpha_{i+1};
\]
thus, $\alpha_{i,u_{i+2}}=\alpha_{i+2}$.

We shall use the following standard description of indecomposables, due to Dress--Scharlau~\cite[Theorem 2]{DressScharlau1982} and used in the form stated by Blomer--Kala~\cite[Section 2]{BlomerKala2018} and Hejda--Kala~\cite[Section 2]{HejdaKala2020}.

\begin{theorem}[Dress--Scharlau, Theorem 2]\label{thm:indecomp}
The indecomposable totally positive elements of $\OK$ are precisely
\[
        \alpha_{i,r},\quad \alpha_{i,r}'
\]
with $i\ge -1$ odd and $0\le r<u_{i+2}$.  Moreover, $\alpha_i$ is totally positive if and only if $i$ is odd, with $-1$ regarded as odd.
\end{theorem}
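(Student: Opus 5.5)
The plan is to follow the classical route through the geometry of the continued fraction. I would view $\OK$ as a lattice in $\mathbb R^2$ via $\gamma\mapsto(\gamma,\gamma')$ and identify the totally positive elements with the lattice points in the open first quadrant.

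\emph{Step 1: the boundary of the convex hull.} First I would show that the elements $\alpha_i$ with $i$ odd, including $\alpha_{-1}=1$, are totally positive. Their conjugates are $\alpha_i'=p_i-q_i\omegaD$, and the sign of $p_i-q_i\omegaD$ alternates with the parity of $i$. For even $i$, $\alpha_i'<0$, so $\alpha_i$ is not totally positive. Next I would show that the odd-index points $\alpha_{-1},\alpha_1,\alpha_3,\ldots$, together with their conjugates and the unit images needed to handle negative indices, are the vertices of the boundary of the convex hull of the lattice points in the quadrant. The semiconvergents $\alpha_{i,r}=\alpha_i+r\alpha_{i+1}$ with $0\le r\le u_{i+2}$ are exactly the lattice points on the segment from $\alpha_i$ to $\alpha_{i+2}$. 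This uses the recursion $\alpha_{i+2}=u_{i+2}\alpha_{i+1}+\alpha_i$ together with the unimodularity $p_iq_{i+1}-p_{i+1}q_i=\pm1$, which shows that consecutive boundary points form a lattice basis.

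\emph{Step 2: boundary points are indecomposable.} Suppose $\alpha_{i,r}=\beta+\gamma$ with $\beta,\gamma\in\OK^+$. Then $\beta$ and $\gamma$ lie in the quadrant, and $\beta$ lies in the parallelogram spanned by the origin and $\alpha_{i,r}$. Because $\alpha_i$ and $\alpha_{i+1}$ form a basis, I would write $\beta=a\alpha_i+b\alpha_{i+1}$ with $a,b\in\Z$. The sign conditions on $\beta,\beta',\gamma,\gamma'$, using $\alpha_{i+1}>0$ and $\alpha_{i+1}'<0$, then force $a\in\{0,1\}$ and $0\le b\le r$. I would exclude $a=0$, because then $\beta'$ would be a nonnegative multiple of $\alpha_{i+1}'<0$, which is impossible unless $\beta=0$. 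The case $a=1$ gives $\gamma=0$ in the same way.

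\emph{Step 3: every totally positive element not on the boundary is decomposable.} This is the step I expect to be the main obstacle. Take $\gamma\in\OK^+$. Using the sawtooth boundary, I would locate the segment $[\alpha_{i,r},\alpha_{i,r+1}]$, or its conjugate or unit translate, whose cone from the origin contains $\gamma$, and write $\gamma=a\alpha_{i,r}+b\alpha_{i,r+1}$ with $a,b\ge0$ integers, again by unimodularity. If $a+b\ge2$, or if $a+b=1$ but $\gamma$ is not itself a semiconvergent, then $\gamma$ is visibly a sum of two totally positive elements. The delicate points are the two ends of each segment, where the next segment begins at $\alpha_{i+2}$, and the covering of the whole quadrant by these cones. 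For the covering I would use that multiplication by $\eps_+$ shifts indices by the period (even $s$) or by twice the period (odd $s$), together with the symmetry $\gamma\mapsto\gamma'$, which accounts for the conjugates $\alpha_{i,r}'$. Finally, the restriction $0\le r<u_{i+2}$ removes the double counting of $\alpha_{i+2}=\alpha_{i+2,0}$.
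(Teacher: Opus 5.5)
The paper gives no proof of this statement. It is quoted from Dress--Scharlau, in the normalization of Blomer--Kala and Hejda--Kala, and used as a black box, so there is no internal argument to compare against. Your outline is the standard lattice-geometric proof and it is correct; what it adds over the paper is self-containedness.

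It can be tightened in two places.

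\emph{Step 2.} The convex-hull claim in Step~1 is never actually used, since Steps~2 and~3 already give both inclusions. Within Step~2, the sign analysis yields $a\in\{0,1\}$ with no constraint on $b$: if $a\le -1$, then $\beta>0$ forces $b>0$, and then $\beta'<0$; the case $1-a\le -1$ is symmetric. So the bound $0\le b\le r$ is superfluous.

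\emph{Step 3.} The covering, which you flag as the main obstacle, is short and needs no units. Consider the list $\alpha_{-1,0}=1,\alpha_{-1,1},\dots,\alpha_{-1,u_1}=\alpha_{1,0},\alpha_{1,1},\dots$. Each step adds an even-indexed $\alpha_{i+1}$ with $\alpha_{i+1}>0>\alpha_{i+1}'$. Hence $\gamma'/\gamma$ strictly decreases from $1$ along the list, and it tends to $0$ because $\alpha_i'\to0$ while $\alpha_i\to\infty$.
\begin{itemize}
\item The closed cones on consecutive pairs therefore cover every $\gamma\in\OK^+$ with $\gamma'\le\gamma$, and conjugation covers the rest.
\item Each consecutive pair is a $\Z$-basis, with determinant $1$ relative to $\{\alpha_i,\alpha_{i+1}\}$. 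So $\gamma$ has nonnegative integer coordinates in that pair.
\item Consequently $\gamma$ is decomposable unless it equals one of the two generators. In particular, your case ``$a+b=1$ but $\gamma$ not a semiconvergent'' never arises.
\end{itemize}

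If you prefer the $\eps_+$-translation route, use only positive powers of $\eps_+$, which map the list into itself, together with conjugation. Negative powers would require identifying $\eps_+^{-k}\alpha_{i,r}$ with conjugates of listed elements, which is avoidable bookkeeping.
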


Let $\eta>1$ be the fundamental unit of $\OK$, not necessarily totally positive.  Then $\eta=\alpha_{s-1}$ and $\N(\eta)=(-1)^s$.  The totally positive fundamental unit is
\[
        \eps_+=\begin{cases}
        \eta &s\text{ even},\\
        \eta^2 &s\text{ odd}.
        \end{cases}
\]

Blomer--Kala define
\[
M_D=\begin{cases}
 u_1+u_3+\cdots+u_{s-1} &s\text{ even},\\
 2u_0+u_1+\cdots+u_{s-1} &s\text{ odd},\;D\equiv2,3\pmod4,\\
 2u_0+u_1+\cdots+u_{s-1}-1 &s\text{ odd},\;D\equiv1\pmod4.
\end{cases}
\]
Equivalently, if $s$ is odd then $M_D=u_1+\cdots+u_s$.  The role of $M_D$ is the following.  The indecomposable totally positive integers, taken modulo multiplication by totally positive units, fall into finitely many classes; Blomer--Kala show that there are exactly $M_D$ of them, with representatives forming the set $S_0$ \cite[Section 2]{BlomerKala2018}.  Universality of a diagonal form is governed by which of these classes it represents, and Blomer--Kala assemble a universal form by devoting, to each class $\sigma\in S_0$, eight variables: four with coefficient $\sigma$ and four with coefficient $\eps_+\sigma$.  This yields a universal diagonal form in $8M_D$ variables \cite[Theorem 10]{BlomerKala2018}.  When $M_D=1$ the only class is that of $1$, the eight coefficients are $1$ (four times) and $\eps_+$ (four times), and the form is exactly $f$.  The content of the converse half of this paper is that $M_D=1$ is also \emph{necessary} for this particular eight-variable shape to be universal.

We shall also use the following square-root restriction, which limits the possible square roots of an indecomposable to the convergents $\alpha_j$ and their conjugates.

\begin{lemma}[Blomer--Kala, Lemma 11]\label{lem:BK11}
If $v\in\OK$ and $v^2$ is indecomposable, then for some $j\ge -1$,
\[
        v\in\{\pm\alpha_j,\pm\alpha_j'\}.
\]
\end{lemma}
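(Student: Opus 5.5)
The plan is to translate indecomposability of $v^2$ into a statement about $v$ itself, via the elementary identity
\[
v^2=(w+z)^2=w^2+z^2+2wz .
\]
If $v=w+z$ with $w,z\in\OK$ nonzero and $wz$ totally nonnegative, then $w^2$ and $z^2+2wz$ are both totally positive. Hence $v^2$ is decomposable. Call $v$ \emph{sign-indecomposable} if it admits no splitting $v=w+z$ into nonzero parts with $\operatorname{sgn} w=\operatorname{sgn} z$ and $\operatorname{sgn} w'=\operatorname{sgn} z'$. The lemma then reduces to showing that every sign-indecomposable $v$ lies in $\{\pm\alpha_j,\pm\alpha_j'\}$.

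First I normalize. Replacing $v$ by $\pm v$ or $\pm v'$ does not change $v^2$ up to conjugation, nor the target set, so I may assume $v>0$ and $v\ge |v'|$. This leaves two sign patterns to treat.

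\emph{Case $v'<0$.} The elements with $x>0>x'$ form a lattice cone in $\OK\subset\mathbb R^2$. Sign-indecomposability says exactly that $v$ is an additive indecomposable of this cone. By the same Klein-sail/continued-fraction analysis that underlies Theorem~\ref{thm:indecomp}, applied now to the cone of mixed sign pattern, these indecomposables are $\alpha_{i,r}$ with $i$ even and $0\le r<u_{i+2}$, together with conjugates under the normalization. Theorem~\ref{thm:indecomp} records that $\alpha_i$ has this sign pattern precisely for even $i$. It then remains to exclude the genuine semiconvergents, those with $1\le r<u_{i+2}$. For these I would exhibit an explicit decomposition of $\alpha_{i,r}^2$, expanding
\[
\alpha_{i,r}^2=\alpha_i^2+2r\alpha_i\alpha_{i+1}+r^2\alpha_{i+1}^2
\]
and regrouping. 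The regrouping uses the facts that $\alpha_i\alpha_{i+1}$ has a controlled sign pattern, and that $|\N(\alpha_{i,r})|$ exceeds $\min(|\N(\alpha_i)|,|\N(\alpha_{i+2})|)$. Together these should show that $\alpha_{i,r}^2$ exceeds a totally positive square of a smaller element, such as $\alpha_{i+1}^2$ or $\alpha_{i,r-1}\alpha_{i,r+1}$, by a totally positive amount.

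\emph{Case $v'>0$.} Here $v$ is totally positive, and sign-indecomposability forces $v$ itself to be indecomposable. So by Theorem~\ref{thm:indecomp} $v=\alpha_{i,r}$ (up to conjugation) with $i$ odd. I again need to rule out $1\le r<u_{i+2}$, by the same norm-comparison: show that
\[
\alpha_{i,r}^2-\alpha_i\alpha_{i+2}
\]
is totally positive. This requires checking the signs of both conjugates. A cleaner alternative is to use $\alpha_{i,r}^2-\alpha_{i,r-1}\alpha_{i,r+1}=\pm\alpha_{i+1}^2\cdot(\text{unit sign})$, a three-term identity coming from linearity in $r$. That identity shows the square sits strictly above a product of two totally positive elements.

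The main obstacle is this last step, killing the proper semiconvergents $0<r<u_{i+2}$, in both cases. The reduction to cone indecomposables is routine, but producing an explicit totally positive splitting of $\alpha_{i,r}^2$ needs careful sign bookkeeping for $\alpha_{i+1}$ and its conjugate. I would first try the identity $\alpha_{i,r-1}\alpha_{i,r+1}=\alpha_{i,r}^2-\alpha_{i+1}^2$ (a product of totally positive elements, so it is totally positive or decomposes further). Where this fails at the endpoints $r=1$ or $r=u_{i+2}-1$, I would handle those cases by substituting $\alpha_i$ or $\alpha_{i+2}$ for the missing neighbour.
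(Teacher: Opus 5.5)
The paper gives no proof of this lemma; it quotes it from Blomer--Kala. So your argument has to stand on its own. Its architecture is sound. The splitting $v^2=w^2+(z^2+2wz)$ correctly reduces the lemma to indecomposability of $v$ inside its own open quadrant. Moreover, the identity you reach at the end,
\[
\alpha_{i,r}^2=\alpha_{i,r-1}\,\alpha_{i,r+1}+\alpha_{i+1}^2,
\]
is exact, with no unit or sign factor, and it already kills every proper semiconvergent in \emph{both} quadrants. For $1\le r\le u_{i+2}-1$ the neighbours satisfy $0\le r\pm1\le u_{i+2}$. Since $\alpha_{i,0}=\alpha_i$ and $\alpha_{i,u_{i+2}}=\alpha_{i+2}$ by definition, nothing fails at $r=1$ or $r=u_{i+2}-1$. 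All $\alpha_{i,r}$ with $0\le r\le u_{i+2}$ lie on the segment from $\alpha_i$ to $\alpha_{i+2}$, hence in one open quadrant. So $\alpha_{i,r-1}\alpha_{i,r+1}$ is totally positive whether $i$ is odd or even, and the step you call the main obstacle is already done. Drop your other suggestion: $\alpha_{i,r}^2-\alpha_i\alpha_{i+2}=\alpha_{i+1}\bigl((2r-u_{i+2})\alpha_i+r^2\alpha_{i+1}\bigr)$ is negative at $r=1$ whenever $(u_{i+2}-2)\alpha_i>\alpha_{i+1}$ (e.g.\ $D=43$, $i=3$). So it is not totally positive in general.

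What is actually missing sits in the step you call routine.

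\emph{The missing row.} Write elements as $p-q\omegaD'$. On $\{x>0>x'\}$ one has $x-x'=q\delta>0$, so $q\ge1$, and therefore every $r-\omegaD'$ with $\omegaD'<r<\omegaD$ is indecomposable in that cone. This row is the sail edge from $-\alpha_0'$ to $\alpha_0$. Set $\alpha_{-2}:=-\omegaD'$, so that $\alpha_0=u_0\alpha_{-1}+\alpha_{-2}$. Then these elements are $\alpha_{-2,r}=\alpha_{-2}+r\alpha_{-1}$, so your list of even-indexed semiconvergents must start at $i=-2$. Its $r=0$ member is neither a convergent $\alpha_j$ with $j\ge-1$ nor covered by your exclusion of $r\ge1$.

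\emph{The element your cases never treat.} For $D\equiv2,3\pmod4$ this member is $v=\sqrt D$. It satisfies your normalization and is sign-indecomposable. It lies outside $\{\pm\alpha_j,\pm\alpha_j'\}$, since its rational part is $0$. It is trivially disposed of: $D=1+(D-1)$, or apply the identity with neighbours $\sqrt D\pm1$. For $D\equiv1\pmod4$, your normalization turns $\alpha_{-2}$ into $\omegaD=\alpha_{-2,1}$. The identity handles this when $u_0\ge2$, and it equals $\alpha_0$ when $D=5$.

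\emph{The substantive point.} The mixed-quadrant classification carries the whole case, yet you assert it only by analogy, and the form you state is incomplete. State it precisely: the $\alpha_{i,r}$ with $i\ge-2$ even and $0\le r<u_{i+2}$, together with their images under $x\mapsto-x'$. Then either prove it by the Klein-sail argument or cite it. With that and the identity above, your proof is complete.
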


\section{The cases with $M_D=1$}\label{sec:MDone}

Both halves of Theorem~\ref{thm:main} pass through the condition $M_D=1$---so before proving anything about $f$ we identify the fields that satisfy it.  The invariant $M_D$ is a sum of partial quotients of $\omegaD$ over one period---the odd-indexed ones when the period is even---so $M_D=1$ is the most rigid possible demand on that expansion.  Forcing the sum down to $1$ leaves only the shortest periods: period-$2$ with its single free partial quotient equal to $1$, together with the degenerate period-$1$ expansion of $\Q(\sqrt5)$.  Solving the resulting quadratic for the periodic tail then produces exactly two infinite families, one for each parity class of $D$.

Let
\[
\F=\{n^2-1:n\ge2,\ n^2-1\text{ squarefree}\}
\cup
\{n^2-4:n\ge3\text{ odd},\ n^2-4\text{ squarefree}\}.
\]

\begin{proposition}\label{prop:MDone}
For squarefree $D>1$, we have $M_D=1$ if and only if $D\in\F$.
\end{proposition}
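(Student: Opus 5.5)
We will prove the two inclusions by a case analysis on the parity of the period length $s$, using the explicit formula for $M_D$ recalled in Section~\ref{sec:prelim}. All partial quotients $u_i$ with $i\ge1$ are positive integers. Also $u_0\ge1$ in every case: if $D\equiv1\pmod4$ then $D\ge5$ and $\omegaD>1$.

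\emph{Odd period.} Here $M_D=u_1+\cdots+u_s$ is a sum of $s$ positive integers. Hence $M_D=1$ forces $s=1$ and $u_1=u_s=1$.
\begin{itemize}
\item If $D\equiv2,3\pmod4$, then $u_s=2u_0\ge2$, which is impossible.
\item If $D\equiv1\pmod4$, then $u_s=2u_0-1=1$ gives $u_0=1$. So $\omegaD=[1;\overline{1}]=(1+\sqrt5)/2$ and $D=5=3^2-4$, the $n=3$ member of the second family.
\end{itemize}
Conversely, $D=5$ has this expansion and $M_5=u_1=1$.

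\emph{Even period.} Here $M_D=u_1+u_3+\cdots+u_{s-1}$ is a sum of $s/2$ positive integers. Hence $M_D=1$ forces $s=2$ and $u_1=1$, so $\omegaD=[u_0;\overline{1,u_2}]$ with $u_2=2u_0$ or $2u_0-1$. To identify $D$, we solve the periodic tail. If $\theta=[\overline{1,u_2}]$, then $\theta=1+1/(u_2+1/\theta)$, which gives
\[
u_2\theta^2-u_2\theta-1=0,\qquad \theta=\frac{u_2+\sqrt{u_2^2+4u_2}}{2u_2}.
\]
Substituting into $\omegaD=u_0+1/\theta$ and rationalizing gives the two families.
\begin{itemize}
\item If $D\equiv2,3\pmod4$ and $u_2=2u_0$, this should reduce to $\sqrt D=\sqrt{u_0^2+2u_0}$, i.e.\ $D=(u_0+1)^2-1$.
\item If $D\equiv1\pmod4$ and $u_2=2u_0-1$, writing $m=2u_0-1$ should give $\sqrt D=\sqrt{m^2+4m}$, i.e.\ $D=(m+2)^2-4$ with $n=m+2=2u_0+1$ odd and $\ge3$.
\end{itemize}
The case $n=3$ would give $D=5$, but $D=5$ actually has period $1$. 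So in the even-period case we get $n\ge5$, with $D=5$ already covered by the odd case.

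\emph{Converse.} For $D=n^2-1$ squarefree we have $D\equiv3\pmod4$ or $D\equiv0$, and $D\equiv0$ is excluded by squarefreeness. One checks directly the standard expansion $\sqrt{n^2-1}=[n-1;\overline{1,2n-2}]$. For $D=n^2-4$ with $n\ge5$ odd we have $D\equiv1\pmod4$, and one verifies $(1+\sqrt D)/2=[(n-1)/2;\overline{1,n-2}]$ by running the recursion backwards from the computation above. Both expansions have minimal period $2$ because $1\ne 2n-2$ and $1\neq n-2$. Therefore $M_D=u_1=1$.

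The main obstacle is purely bookkeeping: keeping the $D\equiv1\pmod4$ normalization of $\omegaD$ consistent through the rationalization, confirming that minimality of the period holds, and isolating $D=5$ correctly as the degenerate boundary rather than a period-$2$ case. Each of these is settled by direct verification of the explicit continued fractions above.
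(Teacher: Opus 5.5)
Your proposal is correct and follows the paper's proof essentially step for step. You split on the parity of $s$ and force either $s=1$ (giving $D=5$) or $s=2$ with $u_1=1$, then solve the period-$2$ tail to obtain $D=(u_0+1)^2-1$ or $D=(2u_0+1)^2-4$, and prove the converse by checking the explicit expansions; your rationalization $\omegaD=u_0-\tfrac{u_2}{2}+\tfrac12\sqrt{u_2^2+4u_2}$ and your minimal-period check supply details the paper leaves as ``solving the associated quadratic.''
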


\begin{proof}
First suppose $D=n^2-1$ is squarefree.  Squarefreeness forces $n$ to be even: if $n\ge3$ is odd, then $n^2-1=(n-1)(n+1)$ is divisible by $4$ and hence is not squarefree.  Also
\[
        \sqrt{n^2-1}=[n-1;\overline{1,2n-2}].
\]
Solving the quadratic equation defined by the displayed purely periodic tail gives this continued fraction and no shorter period.  Thus $s=2$ and $u_1=1$, so $M_D=1$.

Next suppose $D=n^2-4$ with $n=2m+1$ odd.  Then $D\equiv1\pmod4$.  For $n=3$ we have $D=5$ and
\[
        \omegaD=[1;\overline{1}],
\]
so $M_D=1$.  For $n\ge5$,
\[
        \omegaD=\frac{1+\sqrt{n^2-4}}2=[m;\overline{1,2m-1}],
\]
again by solving the associated quadratic equation for the period-$2$ expansion.  Hence $s=2$, $u_1=1$, and $M_D=1$.

Conversely, assume $M_D=1$.  If $s$ is odd, then $M_D=u_1+\cdots+u_s$, whence $s=1$ and $u_1=1$.  Since $u_s=2u_0$ in the case $D\equiv2,3\pmod4$, this is impossible there.  Hence $D\equiv1\pmod4$, $u_0=1$, and $D=5=3^2-4$.

If $s$ is even, then
\[
        1=M_D=u_1+u_3+\cdots+u_{s-1},
\]
so $s=2$ and $u_1=1$.  In the case $D\equiv2,3\pmod4$, solving
\[
        \sqrt D=[u_0;\overline{1,2u_0}]
\]
gives $D=u_0^2+2u_0=(u_0+1)^2-1$.  In the case $D\equiv1\pmod4$, solving
\[
        \omegaD=[u_0;\overline{1,2u_0-1}]
\]
gives $D=(2u_0+1)^2-4$.  These are exactly the two asserted families.
\end{proof}

The two families thus correspond to the two arithmetic types of real quadratic field: among fields with $M_D=1$, those with $D\equiv2,3\pmod4$ are exactly $D=n^2-1$, while those with $D\equiv1\pmod4$ are exactly $D=n^2-4$ with $n$ odd.  The boundary case $n=3$, where the period-$2$ expansion degenerates to the period-$1$ golden-section expansion of $\Q(\sqrt5)$, is precisely Maa{\ss}'s field; the second family should be read as the natural continuation of that single classical example into an infinite series.  For illustration, the squarefree examples with $D\le100$ are:
\[
\begin{array}{c|c|c}
D&\text{family}&\text{continued fraction of $\omegaD$}\\ \hline
3&2^2-1&[1;\overline{1,2}]\\
5&3^2-4&[1;\overline{1}]\\
15&4^2-1&[3;\overline{1,6}]\\
21&5^2-4&[2;\overline{1,3}]\\
35&6^2-1&[5;\overline{1,10}]\\
77&9^2-4&[4;\overline{1,7}]
\end{array}
\]

\section{Sufficiency}\label{sec:suff}

The sufficiency direction is short because, once $M_D=1$, the Blomer--Kala machinery already contains the answer: their general $8M_D$-variable form is precisely Kim's~$f$.  Conceptually, $M_D=1$ says that, up to totally positive units, there is only one indecomposable to represent, namely $1$ itself; representing every totally positive integer then reduces to the rational four-square theorem applied one unit-layer at a time.

\begin{proposition}\label{prop:sufficiency}
If $D\in\F$, then the form $f$ in Theorem~\ref{thm:main} is universal over~$\OK$.
\end{proposition}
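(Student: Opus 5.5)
Since Proposition~\ref{prop:MDone} gives $M_D=1$ for $D\in\F$, we have either $s=2$ with $u_1=1$, or $D=5$ with $s=1$. The plan is to show directly that every totally positive $\gamma\in\OK$ is a sum of elements of the form $\mu x^2$ with $\mu\in\{1,\eps_+\}$ and $x\in\Z$, grouped so that Lagrange's four-square theorem applies to each coefficient.

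The first step is to describe the semigroup $\OK^+$ explicitly. By Theorem~\ref{thm:indecomp}, the indecomposables are $\alpha_{i,r}$ and $\alpha_{i,r}'$ with $i$ odd and $0\le r<u_{i+2}$. When $s=2$ and $u_1=1$, the constraint $r<u_{i+2}=u_1=1$ forces $r=0$, so the indecomposables are exactly the $\alpha_i$ with $i$ odd. These are the totally positive units $1,\eps_+,\eps_+^2,\dots$, since $\alpha_{2k-1}=\eps_+^k$ because $\eta=\alpha_{1}$ is totally positive, together with their conjugates $\eps_+^{-k}$.

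The standard structure theory of Dress--Scharlau (as presented by Hejda--Kala) says that every totally positive integer lies in a cone spanned by two consecutive indecomposables. Here that gives
\[
\gamma=a\,\eps_+^{k}+b\,\eps_+^{k+1},\qquad a,b\in\Z_{\ge0},\ k\in\Z.
\]
This is the ``two-unit-layer'' decomposition. For $D=5$ the same holds with $\eps_+=\eta^2$ and consecutive powers of $\eta^2$; alternatively one can treat it separately via Maa{\ss}.

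The second step handles the parity of $k$. If $k=2j$, then $\gamma=\eps_+^{2j}\cdot a+\eps_+\cdot\eps_+^{2j}b$. Writing $a=\sum_{t=1}^4 a_t^2$ and $b=\sum_{t=1}^4 b_t^2$ by Lagrange, we take $x_t=\eps_+^j a_t$ and $x_{4+t}=\eps_+^j b_t$, which lie in $\OK$ since $\eps_+^j$ is a unit. If $k=2j+1$, then $\gamma=\eps_+^{2j+2}b+\eps_+\cdot\eps_+^{2j}a$, and we swap the roles of $a$ and $b$ accordingly. In both cases $f$ represents $\gamma$. For odd $s$ (only $D=5$), the unit $\eps_+=\eta^2$ is itself a square, so we would instead use the cone decomposition between $\eta^{2k}$ and $\eta^{2k+2}$. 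Here $\eps_+^k$ is replaced by a power of $\eta$ that is a square times possibly $\eps_+$, and the same argument goes through.

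The main obstacle is justifying the two-consecutive-indecomposables decomposition with nonnegative integer coefficients. I would prove it directly. Every $\gamma\in\OK^+$ has slope $\gamma'/\gamma$ lying between those of some consecutive pair $\eps_+^k,\eps_+^{k+1}$, because the slopes $\eps_+^{-2k}$ tend monotonically to $0$ and $\infty$. Then $\gamma=a\eps_+^k+b\eps_+^{k+1}$ with real $a,b\ge0$. These coefficients are integers because $\{\eps_+^k,\eps_+^{k+1}\}$ is a $\Z$-basis of $\OK$. To check this, note that $\det\begin{pmatrix}1&\eps_+\\1&\eps_+'\end{pmatrix}$ equals $\eps_+'-\eps_+$, and since $\eps_+=\alpha_1$ with $q_1=1$, this difference is $\pm\delta$, the discriminant root of $\OK$; multiplying by the unit $\eps_+^k$ preserves this. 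For $D=5$ the analogous check uses $\eta^{k},\eta^{k+1}$ with $\eta-\eta'=\sqrt5$, and then groups the resulting coefficients by parity of the exponent.
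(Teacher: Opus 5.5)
Your proof is correct and is essentially the paper's argument. You show that $\{\eps_+^k,\eps_+^{k+1}\}$ is a $\Z$-basis of $\OK$, place $\gamma$ in the cone between consecutive powers of $\eps_+$ to get integers $a,b\ge0$, and apply Lagrange with the same parity split on $k$; the Dress--Scharlau description of the indecomposables is never actually used. Your separate and somewhat muddled treatment of $D=5$, in particular the aside about $\eta^k,\eta^{k+1}$, is also unnecessary: there $\eps_+=\eta^2=1+\omegaD=\alpha_1$ with $q_1=1$, so your uniform check $\eps_+-\eps_+'=\delta$ already covers that case, exactly as in the paper.
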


\begin{proof}
By Proposition~\ref{prop:MDone}, $D\in\F$ implies $M_D=1$.  We give a direct proof of universality in the two families.

First note that in both families $\{1,\eps_+\}$ is a $\Z$-basis of $\OK$.  Indeed, if $D=n^2-1$, then
\[
        \eps_+=n+\sqrt D=n+\omegaD,
\]
so $\omegaD=\eps_+-n$.  If $D=n^2-4$ with $n=2m+1$, then, including the case $D=5$,
\[
        \eps_+=\frac{n+\sqrt D}{2}=m+\omegaD,
\]
so $\omegaD=\eps_+-m$.  Hence $\{\eps_+^k,\eps_+^{k+1}\}$ is a $\Z$-basis of $\OK$ for every $k\in\Z$.

Let $\alpha\in\OK^+$.  Choose $k\in\Z$ such that
\[
        \eps_+^{2k}\le \frac{\alpha}{\alpha'}\le \eps_+^{2k+2}.
\]
Write
\[
        \alpha=a\eps_+^k+b\eps_+^{k+1}
\]
with $a,b\in\Z$.  Put $A=\alpha\eps_+^{-k}$ and $A^*=\alpha'\eps_+^k$.  Then
\[
        A=a+b\eps_+,\qquad A^*=a+b\eps_+^{-1},
\]
so
\[
        b=\frac{A-A^*}{\eps_+-\eps_+^{-1}},
        \qquad
        a=\frac{\eps_+A^*-\eps_+^{-1}A}{\eps_+-\eps_+^{-1}}.
\]
The choice of $k$ gives $A\ge A^*$ and $\eps_+A^*\ge \eps_+^{-1}A$, so $a,b\ge0$.

By Lagrange's theorem, write
\[
        a=r_1^2+\cdots+r_4^2,\qquad
        b=s_1^2+\cdots+s_4^2
\]
with $r_i,s_i\in\Z$.  If $k=2m$, take
\[
        x_i=\eps_+^m r_i\quad(1\le i\le4),
        \qquad
        x_{4+i}=\eps_+^m s_i\quad(1\le i\le4).
\]
If $k=2m+1$, take
\[
        x_i=\eps_+^{m+1}s_i\quad(1\le i\le4),
        \qquad
        x_{4+i}=\eps_+^m r_i\quad(1\le i\le4).
\]
In either case the form $f$ represents $\alpha$.  Thus $f$ is universal.
\end{proof}

\begin{remark}
The direct construction presented in the proof of Proposition~\ref{prop:sufficiency} is exactly the $M_D=1$ specialization of the Blomer--Kala $8M_D$-variable form \cite[Theorem 10]{BlomerKala2018}.
\end{remark} 

\section{Necessity: the square-class dichotomy}\label{sec:dichotomy}

We now turn to the converse, which is the heart of the paper, and we argue by contrapositive.  Assume $D\notin\F$; equivalently, by Proposition~\ref{prop:MDone}, $M_D\ge2$.  We shall exhibit an indecomposable totally positive element of $\OK$ which lies in neither square class $\OK^2$ nor $\eps_+\OK^2$; Lemma~\ref{lem:indecomp-square} then shows that this element is not represented by $f$, and hence that $f$ is not universal.

The strategy is to shift attention from arbitrary totally positive integers to \emph{indecomposable} ones.  The reason this helps is structural: $f$ is a sum of eight squared terms, four with coefficient $1$ and four with coefficient $\eps_+$, and each nonzero such term is totally positive.  An indecomposable, by definition, cannot be split as a sum of two totally positive integers; so if $f$ represents an indecomposable, then all but one of the eight terms must vanish.  This collapses the eight-dimensional representation problem onto a single squared term and yields the key fact that drives everything that follows.

\begin{lemma}\label{lem:indecomp-square}
If an indecomposable $\gamma\in\OK^+$ is represented by $f$, then
\[
        \gamma=v^2\quad\text{or}\quad \gamma=\eps_+v^2
\]
for some nonzero $v\in\OK$.
\end{lemma}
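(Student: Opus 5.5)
Suppose $\gamma=f(x_1,\dots,x_8)$ with all $x_i\in\OK$. We argue directly from the definition of indecomposability. Group the value of $f$ into its eight summands $t_i=x_i^2$ for $1\le i\le 4$ and $t_i=\eps_+x_i^2$ for $5\le i\le 8$.

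First we observe that each summand is either $0$ or totally positive. If $x_i\neq 0$, then $x_i$ and $x_i'$ are nonzero reals, so $x_i^2\succ0$. Since $\eps_+$ is totally positive, $\eps_+x_i^2\succ 0$ as well. Let $I=\{i: x_i\ne0\}$. Because $\gamma\ne0$, the set $I$ is nonempty.

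Next we show that $|I|=1$. Suppose instead that $|I|\ge2$, and pick some $j\in I$. Then
\[
\gamma = t_j + \sum_{i\in I\setminus\{j\}} t_i .
\]
Both $t_j$ and the remaining sum are totally positive elements of $\OK$, since a nonempty sum of totally positive elements is totally positive. This writes $\gamma$ as a sum of two elements of $\OK^+$, contradicting indecomposability. Hence exactly one $x_i$ is nonzero.

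Finally, we read off the conclusion with $v=x_i\neq0$. If $i\le4$, then $\gamma=v^2$; if $i\ge5$, then $\gamma=\eps_+v^2$. There is no real obstacle here. The only point needing care is the convention that indecomposable means not expressible as a sum of two totally positive integers (and that $\gamma$ itself is totally positive), which is the standard definition underlying Theorem~\ref{thm:indecomp}.
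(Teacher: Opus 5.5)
Your proof is correct and takes the same approach as the paper: each nonzero summand $x_i^2$ or $\eps_+x_i^2$ is totally positive, so two or more nonzero summands would split $\gamma$ into a sum of two totally positive elements. Hence exactly one $x_i$ is nonzero, and you read off $\gamma=v^2$ or $\gamma=\eps_+v^2$. You supply slightly more detail than the paper, namely grouping the remaining summands into one totally positive element and noting that $x_i\neq0$ forces $x_i'\neq0$, but the argument is the same.
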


\begin{proof}
Each nonzero summand $x_i^2$ or $\eps_+x_i^2$ is totally positive.  If two nonzero summands occurred, $\gamma$ would decompose as a sum of two nonzero totally positive algebraic integers.  Thus exactly one square is nonzero.
\end{proof}

In other words, the only indecomposables $f$ can possibly miss are those lying outside both square classes $\OK^2$ and $\eps_+\OK^2$---and to obstruct universality, it is enough to exhibit a single such indecomposable.  The simplest way an indecomposable can fall outside both classes is recorded by its norm.

\begin{lemma}\label{lem:norm-easy}
If some indecomposable $\gamma\in\OK^+$ has $|\N(\gamma)|$ not a rational square, then $f$ is not universal.
\end{lemma}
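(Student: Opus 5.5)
We argue by contrapositive via Lemma~\ref{lem:indecomp-square}. Suppose $f$ were universal. Then the given indecomposable $\gamma\in\OK^+$ would be represented by $f$, and Lemma~\ref{lem:indecomp-square} gives a nonzero $v\in\OK$ with either $\gamma=v^2$ or $\gamma=\eps_+v^2$. We then take norms in each case and compare with the hypothesis that $|\N(\gamma)|$ is not a rational square.

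In the first case, multiplicativity of the norm gives $\N(\gamma)=\N(v)^2$. Here $\N(v)\in\Z$ because $v$ is an algebraic integer, so $|\N(\gamma)|=\N(v)^2$ is a rational square. This contradicts the hypothesis.

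In the second case, $\N(\gamma)=\N(\eps_+)\N(v)^2$. Since $\eps_+$ is a totally positive unit, $\N(\eps_+)=\eps_+\eps_+'$ is a positive unit of $\Z$, so it equals $1$. Hence again $|\N(\gamma)|=\N(v)^2$ is a perfect square, a contradiction.

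In both cases we reach a contradiction, so $\gamma$ is not represented by $f$, and $f$ is not universal. No step here presents a real obstacle. The only point needing care is $\N(\eps_+)=1$, which follows from total positivity alone and does not depend on the parity of $s$; when $s$ is odd it can also be seen directly from $\eps_+=\eta^2$.
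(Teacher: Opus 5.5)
Your proof is correct and is essentially the paper's own argument: Lemma~\ref{lem:indecomp-square} reduces to $\gamma=v^2$ or $\gamma=\eps_+v^2$, and in both cases $|\N(\gamma)|=\N(v)^2$ because $\N(\eps_+)=1$. Your derivation of $\N(\eps_+)=1$ from total positivity alone, independent of the parity of $s$, is a small point of rigor that the paper states without justification.
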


\begin{proof}
If $\gamma=v^2$ or $\gamma=\eps_+v^2$, then $|\N(\gamma)|=\N(v)^2$, since $\N(\eps_+)=1$.  This is a rational square, so an indecomposable with non-square absolute norm can be neither, and Lemma~\ref{lem:indecomp-square} denies its representation.
\end{proof}

The norm test in Lemma~\ref{lem:norm-easy} is blunt: it sees only the product $\gamma\gamma'$ and cannot distinguish the two square classes from one another.  When it fails---that is, when every indecomposable has a square norm---we need a finer instrument that knows which class an element belongs to: the mechanism that supplies this is the norm of the fundamental unit, $\N(\eta)=(-1)^s$, where $s$ is the period of $\omegaD$.  When $s$ is odd, $\eps_+=\eta^2$ is a square and the two classes coincide; when $s$ is even, they are genuinely distinct.  The parity split just described organizes the entire converse argument, and we treat the two cases in turn, beginning with the easier odd case.

\section{The case of $s$ odd}\label{sec:odd}

When $s$ is odd, the fundamental unit $\eta$ has norm $-1$ and $\eps_+=\eta^2$, so
\[
        \eps_+\OK^2=\eta^2\OK^2=(\eta\OK)^2=\OK^2;
\]
that is, the two square classes coincide.  This is the favourable case: multiplying by~$\eps_+$ no longer moves an element between classes, so Lemma~\ref{lem:indecomp-square} reduces universality to the single condition that every indecomposable must be a square---and any one non-square indecomposable provides an obstruction.  In this context, a non-square is straightforward to identify: an element whose $\omegaD$-coordinate equals~$1$ is essentially never a square.  When $D\equiv2,3\pmod4$ a square has even $\omegaD$-coordinate, so it cannot be $1$; when $D\equiv1\pmod4$ the only square with $\omegaD$-coordinate $1$ is $\omegaD^2$ itself, and matching it against our witness pins $D$ to the single excluded value~$5$.

\begin{proposition}\label{prop:negative-unit}
If $s$ is odd and $D\notin\F$, then $f$ is not universal.
\end{proposition}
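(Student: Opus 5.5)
The plan is to use the witness suggested just before the statement: an indecomposable whose $\omegaD$-coordinate equals $1$. By Theorem~\ref{thm:indecomp} with $i=-1$, the semiconvergents $\alpha_{-1,r}=1+r\alpha_0$ for $0\le r<u_1$ are indecomposable. Here $\alpha_0=u_0-\omegaD'$, which has $\omegaD$-coordinate $1$ because $-\omegaD'=\omegaD-\mathrm{tr}(\omegaD)$. The natural candidate is $\alpha_{-1,1}=1+\alpha_0$, which requires $u_1\ge2$. If $u_1=1$, I would instead take the next odd-indexed semiconvergent $\alpha_1=u_1\alpha_0+1$ (or $\alpha_{1,1}=\alpha_1+\alpha_2$). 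Its $\omegaD$-coordinate is $q_1=u_1=1$ in the first case. More robustly, I would pick the candidate $\gamma=\alpha_{-1,r}$ or $\alpha_{1,r}$ with coordinate $q=1$. Alternatively, I would use the case analysis $M_D\ge2$: when $s$ is odd, $M_D=u_1+\cdots+u_s\ge2$, and this guarantees some admissible $r\ge1$ at $i=-1$, or else $s\ge3$.

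The cleanest route avoids this bookkeeping by using the element $\gamma=\alpha_0+\alpha_{-1}\cdot r$ only when it is available, and otherwise $\alpha_1=\alpha_0+1$ when $u_1=1$ (with $\alpha_1$ odd-indexed, hence totally positive and indecomposable). In either case the chosen indecomposable has the form $\gamma=c+\omegaD$ with $c\in\Z$, so its $\omegaD$-coordinate is $1$.

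Since $s$ is odd, $\eps_+\OK^2=\OK^2$, so by Lemma~\ref{lem:indecomp-square} it suffices to show that $\gamma$ is not a square. Write $v=x+y\omegaD$. If $D\equiv2,3\pmod4$, then $v^2=x^2+Dy^2+2xy\omegaD$ has even $\omegaD$-coordinate, so $v^2\ne\gamma$. If $D\equiv1\pmod4$, then $\omegaD^2=\omegaD+\frac{D-1}4$, so $v^2=x^2+y^2\frac{D-1}4+(2xy+y^2)\omegaD$. Setting $y(2x+y)=1$ forces $y=\pm1$ and $2x+y=y$, so $x=0$ and $v=\pm\omegaD$, giving $\gamma=\omegaD^2=\omegaD+\frac{D-1}4$. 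The constant term then must satisfy $c=\frac{D-1}4$.

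The main obstacle is this last step: I must compare $c$, which is expressible through $u_0$ (and $u_1$) and the trace, with $\frac{D-1}4$. For $\gamma=1+\alpha_0$ we get $c=1+u_0-1=u_0$, since $-\omegaD'=\omegaD-1$. For $\alpha_1=\alpha_0+1$ (case $u_1=1$) we also get $c=u_0$. We have $u_0=\lfloor\omegaD\rfloor\approx\sqrt D/2$, and $u_0=\frac{D-1}4$ forces $D$ small; solving $\lfloor(1+\sqrt D)/2\rfloor=(D-1)/4$ gives only $D=5$, which lies in $\F$ and is excluded. I would also double-check that the chosen element is genuinely among the Dress--Scharlau indecomposables for the relevant $u_1$, and that the degenerate $s=1$ case (where $u_1=u_s=2u_0-1$ or $2u_0$) is covered. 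That case is handled uniformly because $D\ne5$ gives $u_1\ge2$.
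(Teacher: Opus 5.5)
Your proposal is correct and is essentially the paper's proof: the same witness ($\alpha_{-1,1}=1+\alpha_0$ when $u_1\ge2$, and $\alpha_1=1+\alpha_0$ when $u_1=1$), the same observation that squares have even $\omega_D$-coordinate when $D\equiv2,3\pmod4$, and the same reduction to $u_0=(D-1)/4$ when $D\equiv1\pmod4$. The one step you leave as ``solving'' is done in the paper via the two-sided inequality $\frac{D-1}{4}\le\frac{1+\sqrt D}{2}<\frac{D+3}{4}$, which gives $1<D\le9$ and hence $D=5\in\F$.
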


\begin{proof}
If $u_1\ge2$, take $\gamma=1+\alpha_0=\alpha_{-1,1}$.  If $u_1=1$, then $D\notin\F$ forces $s\ge3$, and we take $\gamma=\alpha_1=1+\alpha_0$.  In either case $\gamma$ is indecomposable with $\omegaD$-coefficient equal to $1$.

If $D\equiv2,3\pmod4$, a square $(a+b\omegaD)^2$ has $\omegaD$-coefficient $2ab$, never $1$.  If $D\equiv1\pmod4$, then
\[
        (a+b\omegaD)^2=a^2+b^2\frac{D-1}{4}+b(2a+b)\omegaD.
\]
The equation $b(2a+b)=1$ forces $(a,b)=(0,1)$ or $(0,-1)$; both choices have square $\omegaD^2=(D-1)/4+\omegaD$.  Equating with $\gamma=u_0+\omegaD$ forces $u_0=(D-1)/4$.  Combined with $u_0=\lfloor(1+\sqrt D)/2\rfloor$, this two-sided inequality
\[
        \frac{D-1}{4}\le\frac{1+\sqrt D}{2}<\frac{D+3}{4}
\]
gives, after squaring and simplification, $(D-1)(D-9)\le0$ and $(D-1)^2>0$, hence $1<D\le9$.  Among $D\equiv1\pmod4$ squarefree in this range only $D=5$ remains, and $D=5\in\F$.  Thus $\gamma$ is not a square.  Since $s$ is odd, the two square classes coincide---so $\gamma$ lies in neither, and Lemma~\ref{lem:indecomp-square} shows that $f$ is not universal.
\end{proof}

\section{The involution $\tau$ and the case of $s=2$}\label{sec:tau}

For even $s$ the two square classes no longer coincide, and a single non-square witness is no longer enough on its own: an indecomposable could fail to be a square yet still be $\eps_+$ times a square.  What rescues the argument is a symmetry that exchanges the two classes, so that one witness can be made to testify against both at once.

We assume henceforth that $s$ is even, so $\eps_+=\eta=\alpha_{s-1}$ has norm $1$ and the two square classes $\OK^2$ and $\eps_+\OK^2$ are now distinct.  Write $\eps=\eps_+$ and define
\[
        \tau(\gamma)=\eps\gamma'.
\]
Since $\N(\eps)=1$ we have $\tau^2=\mathrm{id}$, and $\tau$ interchanges the two square classes:
\[
        \tau(v^2)=\eps(v')^2,
        \qquad
        \tau(\eps v^2)=(v')^2.
\]
Thus an element is $\eps$ times a square if and only if its image under $\tau$ is a square.  This is the device that lets one non-square witness exclude \emph{both} classes at once: if neither~$\gamma$ nor $\tau(\gamma)$ is a square, then $\gamma$ is in neither class.  To use it on the indecomposables, which the Dress--Scharlau description lists as convergents and semiconvergents, we need to know how $\tau$ acts on those coordinates. The palindromicity of the period makes the action a clean index reflection; we record this fact here in the normalization we use in the sequel.

\begin{lemma}\label{lem:tau-formula}
For $-1\le i\le s-1$,
\[
        \eps\alpha_i'=(-1)^{i+1}\alpha_{s-i-2}.
\]
Consequently, for odd $i$ with $-1\le i\le s-3$ and $1\le r\le u_{i+2}-1$,
\begin{gather*}
        \tau(\alpha_{i,r})=\alpha_{s-i-4,\,u_{i+2}-r},\\
        \tau(\alpha_{i,0})=\alpha_{s-i-2,0}.
\end{gather*}
For $1\le r\le u_{i+2}-1$, the semiconvergent formula stays within the fundamental block of indices appearing in $S_0$, since $s-i-4\in[-1,s-3]$.  For $r=0$, the endpoint $i=-1$ gives $\tau(\alpha_{-1,0})=\alpha_{s-1,0}=\eps$, the totally positive fundamental unit, as expected.
\end{lemma}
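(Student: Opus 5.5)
The plan is to prove the identity $\eps\alpha_i'=(-1)^{i+1}\alpha_{s-i-2}$ by a two-term recurrence, and then read off the semiconvergent formulas by linearity.

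\emph{Step 1: both sides satisfy the same recurrence.} Put $\beta_i=(-1)^{i+1}\eps\alpha_i'$ for $-1\le i\le s-1$. Conjugating $\alpha_{i+1}=u_{i+1}\alpha_i+\alpha_{i-1}$ gives
\[
\beta_{i-1}=u_{i+1}\beta_i+\beta_{i+1},
\]
because the sign $(-1)^{i}$ attached to $\beta_{i-1}$ is opposite to the sign attached to $\beta_i$, and $\beta_{i+1}$ carries the same sign as $\beta_{i-1}$. Now set $\gamma_i=\alpha_{s-i-2}$. The original recurrence at index $s-i-2$ reads $\gamma_{i-1}=u_{s-i-1}\gamma_i+\gamma_{i+1}$. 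By palindromicity, $u_{s-i-1}=u_{i+1}$ whenever $1\le i+1\le s-1$. Hence $\beta$ and $\gamma$ obey the same backward recurrence, and it suffices to check two consecutive initial values.

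\emph{Step 2: the initial values.} The natural choices are $i=s-1$ and $i=s-2$.

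At $i=s-1$: $\beta_{s-1}=(-1)^s\eps\eta'=\N(\eta)=1$, using $\eps=\eta$ and $s$ even. Also $\gamma_{s-1}=\alpha_{-1}=1$.

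At $i=s-2$: we need $-\eps\alpha_{s-2}'=\alpha_0$. This is the one genuinely arithmetic input, and I expect it to be the main obstacle. I would use the standard matrix identity
\[
\begin{pmatrix}p_{s-1}&p_{s-2}\\ q_{s-1}&q_{s-2}\end{pmatrix}
=\begin{pmatrix}u_0&1\\1&0\end{pmatrix}\cdots\begin{pmatrix}u_{s-1}&1\\1&0\end{pmatrix}.
\]
Together with $\omegaD=[u_0;u_1,\dots,u_{s-1},u_s+\omegaD-u_0]$, this expresses $\alpha_{s-2}$ in terms of $\eta$. Concretely, I would verify
\[
\alpha_{s-2}=\eta\,(u_0-\omegaD)\cdot(\pm1),
\]
equivalently $\eta'\alpha_0\cdot(\text{sign})=\alpha_{s-2}'$, using $\eta\eta'=1$ and $\alpha_0=u_0-\omegaD'$. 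The cleanest route is the classical identity that $\eta=p_{s-1}-q_{s-1}\omegaD'$ satisfies $\eta\,\omegaD'$-stability. That is, multiplication by $\eta'$ sends the lattice basis $(\alpha_{-1},\alpha_0)$ to $(\pm\alpha_{s-1}',\pm\alpha_{s-2}')$, up to the appropriate reflection. Here the $u_s$ normalization ($2u_0$ or $2u_0-1$) is exactly what makes the constant term come out to $u_0$.

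Alternatively, avoid the case $i=s-2$ and use $i=s-1$ together with $i=s$ as initial values. One would check $\beta_s=-\eps\alpha_s'$ against $\gamma_s=\alpha_{-2}:=0$, interpreted through the recurrence $\alpha_0=u_0\alpha_{-1}+\alpha_{-2}$, with $\alpha_{-2}=-\omegaD'$. Then $\alpha_s=u_s\eta+\alpha_{s-2}$, and the needed identity becomes $\eps\alpha_s'=\omegaD'$-type. This is again equivalent to the periodicity relation $\alpha_{s+j}=\eta\alpha_j'$-twisted; I would try the matrix computation first.

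\emph{Step 3: semiconvergents.} With the identity in hand, for odd $i$ we get $\eps\alpha_i'=\alpha_{s-i-2}$ and $\eps\alpha_{i+1}'=-\alpha_{s-i-3}$. Hence
\[
\tau(\alpha_{i,r})=\alpha_{s-i-2}-r\alpha_{s-i-3}.
\]
Substituting $\alpha_{s-i-2}=u_{s-i-2}\alpha_{s-i-3}+\alpha_{s-i-4}$ and $u_{s-i-2}=u_{i+2}$ (palindromicity) gives
\[
\alpha_{s-i-4}+(u_{i+2}-r)\alpha_{s-i-3}=\alpha_{s-i-4,\,u_{i+2}-r}.
\]
The case $r=0$ is the identity itself. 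The remarks on index ranges, and the value $\tau(1)=\eps$, are immediate.
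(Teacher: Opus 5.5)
Your strategy is the paper's: a downward two-term induction from the base cases $i=s-1$ and $i=s-2$, with palindromicity matching the conjugated recurrence to the reflected one, followed by the same linearity computation for semiconvergents. Your Steps~1 and~3 are correct as written, including the restriction $1\le i+1\le s-1$.

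The one place the proposal is not yet a proof is the base case $i=s-2$. You leave it with an undetermined sign and a vague ``stability'' claim, and that sign is exactly what the lemma asserts at this index. It closes in two lines from the ingredients you already list.

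\begin{itemize}
\item The matrix identity gives $\omegaD=(p_{s-1}\theta_s+p_{s-2})/(q_{s-1}\theta_s+q_{s-2})$ with $\theta_s=u_s+\omegaD-u_0$.
\item The normalization of $u_s$ ($2u_0$ or $2u_0-1$) shows $\theta_s=\alpha_0$ in both congruence classes.
\item Clearing denominators gives $\theta_s(p_{s-1}-q_{s-1}\omegaD)=-(p_{s-2}-q_{s-2}\omegaD)$, i.e.\ $\alpha_0\alpha_{s-1}'=-\alpha_{s-2}'$.
\item Multiplying by $\eps=\alpha_{s-1}$ and using $\N(\eps)=1$ yields $\eps\alpha_{s-2}'=-\alpha_0$, with the sign determined.
\end{itemize}
This is precisely the paper's argument, which invokes the complete-quotient identity $\theta_s=-\alpha_{s-2}'/\alpha_{s-1}'$ directly.

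You should discard the alternative base pair $i=s-1,\,s$, because it fails.
\begin{itemize}
\item It needs the recurrence step at $i=s-1$. There $\beta$ uses $u_{i+1}=u_s$ but $\gamma$ uses $u_{s-i-1}=u_0$, which lies outside the palindromic range you correctly imposed.
\item Concretely, once the $i=s-2$ case is known, $-\eps\alpha_s'=-(u_s-\alpha_0)=\alpha_0-u_s$.
\item On the other hand, $\alpha_{-2}=-\omegaD'=\alpha_0-u_0$, which is not $0$ as you first wrote.
\item These differ by $u_0-u_s\neq0$ for every field of even period.
\end{itemize}
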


\begin{proof}
For $i=s-1$ the identity is $\eps\alpha_{s-1}'=\N(\eps)=1=\alpha_{-1}$.  For $i=s-2$, let $\theta_s=[u_s;u_1,u_2,\ldots]$ be the complete quotient just before the period closes.  Since $\omegaD=u_0+1/[u_1;u_2,\ldots]$, we have $\theta_s=u_s+\omegaD-u_0=\alpha_0$.  The standard identity $\theta_s=-\alpha_{s-2}'/\alpha_{s-1}'$ gives $\eps\alpha_{s-2}'=-\alpha_0$, which is the asserted formula because $s$ is even.  Equivalently, this is the $i=s-2$ coordinate form of the determinant identity
\[
        p_{i+1}q_i-p_iq_{i+1}=(-1)^i.
\]
For the inductive step, assume the formula holds for $i+1$ and $i+2$.  Conjugating the recurrence $\alpha_{i+2}=u_{i+2}\alpha_{i+1}+\alpha_i$ gives $\alpha_i'=\alpha_{i+2}'-u_{i+2}\alpha_{i+1}'$, so
\[
\begin{aligned}
        \eps\alpha_i'
        &=\eps\alpha_{i+2}'-u_{i+2}\,\eps\alpha_{i+1}'\\
        &=(-1)^{i+3}\alpha_{s-i-4}-u_{i+2}(-1)^{i+2}\alpha_{s-i-3}\\
        &=(-1)^{i+1}\bigl[\alpha_{s-i-4}+u_{i+2}\alpha_{s-i-3}\bigr].
\end{aligned}
\]
By palindromicity $u_{i+2}=u_{s-i-2}$, and the forward recurrence gives $\alpha_{s-i-2}=u_{s-i-2}\alpha_{s-i-3}+\alpha_{s-i-4}$.  Therefore $\alpha_{s-i-4}+u_{i+2}\alpha_{s-i-3}=\alpha_{s-i-2}$, completing the induction.

For $i$ odd,
\[
\begin{aligned}
\tau(\alpha_{i,r})
 &=\eps\alpha_i'+r\eps\alpha_{i+1}'  \\
 &=\alpha_{s-i-2}-r\alpha_{s-i-3}  \\
 &=\alpha_{s-i-4}+(u_{i+2}-r)\alpha_{s-i-3},
\end{aligned}
\]
which is the asserted semiconvergent formula for $r\ge1$.  The case $r=0$ is immediate.
\end{proof}

With $\tau$ in hand, the shortest even period is already within reach.  Here $\tau$ permutes the period-$2$ semiconvergents $\gamma_r$ among themselves by $r\mapsto u_1-r$, so once no $\gamma_r$ is a square, the swap property places every $\gamma_r$ outside both classes at once.

\begin{proposition}\label{prop:s2}
If $s=2$, $u_1\ge2$, and $D\notin\F$, then $f$ is not universal.
\end{proposition}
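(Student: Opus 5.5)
Throughout, $s=2$, so $\eps=\eps_+=\alpha_1$ and $\N(\eps)=1$. The plan is to work only with the first block of semiconvergents
\[
        \gamma_r=\alpha_{-1,r}=1+r\alpha_0,\qquad 1\le r\le u_1-1 .
\]
This block is nonempty because $u_1\ge2$. By Theorem~\ref{thm:indecomp} (with $i=-1$ and $u_{i+2}=u_1$) each $\gamma_r$ is an indecomposable in $\OK^+$. Lemma~\ref{lem:tau-formula} with $s=2$ and $i=-1$ gives $s-i-4=-1$, so
\[
        \tau(\gamma_r)=\gamma_{u_1-r},
\]
and $\tau$ permutes the set $\{\gamma_1,\dots,\gamma_{u_1-1}\}$. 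Hence it suffices to show that \emph{no} $\gamma_r$ with $1\le r\le u_1-1$ is a square. Then, for any such $r$, both $\gamma_r$ and $\tau(\gamma_r)=\gamma_{u_1-r}$ are non-squares. By the swap property $\tau(\eps v^2)=(v')^2$, $\gamma_r\notin\eps\OK^2$, and of course $\gamma_r\notin\OK^2$. Lemma~\ref{lem:indecomp-square} then shows $f$ does not represent $\gamma_r$ (say $\gamma_1$).

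To rule out $\gamma_r=v^2$, I will use Lemma~\ref{lem:BK11}. Since $\gamma_r$ is indecomposable, $v\in\{\pm\alpha_j,\pm\alpha_j'\}$ for some $j\ge-1$, so $\gamma_r\in\{\alpha_j^2,(\alpha_j')^2\}$. I then compare sizes under the real embedding in which $\alpha_0=u_0-\omegaD'>1$. In that embedding $1<\gamma_r$, and $\gamma_r<u_1\alpha_0+1=\alpha_1$.

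I eliminate the candidates as follows.
\begin{itemize}
\item For $j=-1$, $\alpha_{-1}^2=1\ne\gamma_r$ since $r\ge1$.
\item For $j\ge1$, $\alpha_j\ge\alpha_1>\gamma_r$ and $\alpha_1>1$, so $\alpha_j^2>\gamma_r$.
\item For every $j\ge0$, $|\alpha_j'|=|p_j-q_j\omegaD|<1$ (standard convergent estimate), so $(\alpha_j')^2<1<\gamma_r$. For $j=-1$, $(\alpha_{-1}')^2=1\ne\gamma_r$.
\end{itemize}

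The only remaining candidate is $\gamma_r=\alpha_0^2$, i.e.\ $\alpha_0^2=r\alpha_0+1$. Then $\alpha_0$ is a root of $x^2-rx-1$, so $\N(\alpha_0)=-1$ and $\alpha_0$ is a unit of norm $-1$. This contradicts $\N(\eta)=(-1)^s=1$, since every unit is $\pm\eta^k$ and therefore has norm $1$.

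The step I expect to need the most care is the size bookkeeping for $\alpha_j'$ with small $j$. One must confirm $|\alpha_0'|<1$, i.e.\ that $\omegaD-u_0\in(0,1)$, which holds in both cases $D\equiv1$ and $D\equiv2,3\pmod4$. One must also confirm that $\alpha_j$ increases with $j$ in the chosen embedding. Both are standard facts about continued fractions. The hypothesis $D\notin\F$ is used only through $u_1\ge2$, which guarantees that the block $\{\gamma_r\}$ is nonempty.
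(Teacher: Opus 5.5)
Your proof is correct, but it takes a different route from the paper at the key step, showing that no $\gamma_r$ is a square.

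\textbf{The paper's route.} It never invokes Lemma~\ref{lem:BK11} here. It writes $\gamma_r=(c+d\omegaD)^2$ in coordinates and eliminates $r$. This gives the positive-definite equation $(c-u_0d)^2+(D-u_0^2)d^2=1$, with the analogous equation for $D\equiv1\pmod4$. It then uses $D-u_0^2\ge2$, justified by $s\ne1$, to force $d=0$.

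\textbf{Your route.} You use Lemma~\ref{lem:BK11} to restrict the root to $\pm\alpha_j,\pm\alpha_j'$. The bracket $1<\gamma_r<\alpha_1$ in the real embedding, with $|\alpha_j'|<1$ and monotonicity of $\alpha_j$, kills every candidate except $\alpha_0$. You then observe that $\alpha_0^2=r\alpha_0+1$ would make $\alpha_0$ a unit of norm $-1$. That is impossible because $\N(\eta)=(-1)^s=1$.

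\textbf{Same underlying fact.} The two arguments turn on the same arithmetic fact. Since $\N(\alpha_0)<0$, one has $-\N(\alpha_0)=D-u_0^2$ when $D\equiv2,3\pmod4$ and $-\N(\alpha_0)=(D-(2u_0-1)^2)/4$ when $D\equiv1\pmod4$. So the paper's statement that the coefficient is at least $2$ is exactly your statement that $\alpha_0$ is not a unit of norm $-1$. The paper justifies it by $s\neq1$; you justify it by $s$ even.

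\textbf{What each buys.} Your version needs no split on $D\bmod4$ and no coordinate algebra. It uses the same root-localization-plus-size technique the paper later uses in Propositions~\ref{prop:s6} and~\ref{prop:residual-large}. The paper's version is fully elementary and self-contained. It needs neither the Blomer--Kala square-root lemma nor the convergent estimates $|\alpha_j'|<1$ and the monotonicity of $(\alpha_j)$.
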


\begin{proof}
For $1\le r\le u_1-1$, put
\[
        \gamma_r=1+r\alpha_0=\alpha_{-1,r};
\]
these are indecomposable by Theorem~\ref{thm:indecomp}.
A direct calculation from Lemma~\ref{lem:tau-formula} gives
\[
        \tau(\gamma_r)=\gamma_{u_1-r}.
\]
We show that no $\gamma_r$ is a square.

First let $D\equiv2,3\pmod4$.  Then
\[
        \gamma_r=(1+ru_0)+r\omegaD.
\]
If $\gamma_r=(c+d\omegaD)^2$, then
\[
        r=2cd,
        \qquad
        1+ru_0=c^2+Dd^2.
\]
Eliminating $r$ gives
\[
        (c-u_0d)^2+(D-u_0^2)d^2=1.
\]
Since $u_0=\lfloor\sqrt D\rfloor$, the coefficient $D-u_0^2$ is a positive integer.  If $D-u_0^2=1$, the continued fraction would have period $1$; since $s=2$, we have $D-u_0^2\ge2$.  Thus the equality above forces $d=0$, and then $r=0$, a contradiction.

Now let $D\equiv1\pmod4$.  Then
\[
        \alpha_0=(u_0-1)+\omegaD,
        \qquad
        \gamma_r=(1+r(u_0-1))+r\omegaD.
\]
If $\gamma_r=(c+d\omegaD)^2$, then
\[
        r=d(2c+d),
\]
and
\[
        1+r(u_0-1)=c^2+d^2\frac{D-1}{4}.
\]
Thus
\[
        (c-(u_0-1)d)^2+\frac{D-(2u_0-1)^2}{4}d^2=1.
\]
Again, since $u_0=\lfloor(1+\sqrt D)/2\rfloor$, the second coefficient is a positive integer.  If it were $1$, the continued fraction would have period $1$; since $s=2$, it is at least $2$.  Thus the equality above forces $d=0$, and then $r=0$, contradiction.

Therefore no $\gamma_r$ is a square.  Since $\tau$ permutes the set $\{\gamma_r:1\le r\le u_1-1\}$, no $\gamma_r$ is $\eps$ 
times a square either.  Lemma~\ref{lem:indecomp-square} thus gives an obstruction to universality.
\end{proof}

\section{The case of even $s\ge 4$}\label{sec:sgeq4}

In this section, we consider $s$ even with $s\ge4$, and $D\notin\F$. 

\subsection{A norm-polynomial obstruction}\label{sec:normpoly}

If some indecomposable has non-square absolute norm, then Lemma~\ref{lem:norm-easy} already finishes the argument.  The substance of this section is the alternative, rigid scenario: if \emph{every} indecomposable has square absolute norm, then the continued fraction of $\omegaD$ is forced into a very specific form.

The intuition is that ``square norm at every indecomposable'' is an enormous overdetermination.  Along a single odd index $i$, the norms $\N(\alpha_i+r\alpha_{i+1})$ are the values of one integer quadratic polynomial $Q_i(r)$, and the run $r=0,1,\dots,u_{i+2}-1$ records consecutive indecomposables.  Requiring all of these to be perfect squares asks a fixed quadratic to hit squares at many consecutive integers.  The discriminant of $Q_i$ is the field discriminant $\Delta_K$, a fundamental---hence non-square---discriminant. 

How many consecutive integer values of a non-square quadratic can be perfect squares is a classical question. Allison~\cite{Allison1986} and Bremner~\cite{Bremner2003} exhibited quadratics taking square values at eight and seven consecutive integers, and Gonz\'alez-Jim\'enez--Xarles~\cite{GJX2011} settled the symmetric case, none of these placing any restriction on the discriminant.  The next lemma is a refinement of that circle of results tailored to our setting: when the discriminant is a positive \emph{fundamental} discriminant, not even three consecutive square values can occur.

\begin{lemma}[No three consecutive square values]\label{lem:three-square-values}
Let
\[
        Q(x)=Ax^2+Bx+C\in\Z[x]
\]
be primitive, and suppose that its discriminant
\[
        \Delta=B^2-4AC
\]
is a positive fundamental discriminant.  Then $Q$ cannot take perfect-square values at three consecutive integers.
\end{lemma}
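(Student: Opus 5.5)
The plan is to translate the three square values into an explicit factorization of the discriminant and then reach a contradiction by congruences modulo~$4$, using only the shape of a fundamental discriminant.

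\emph{Normalize and solve for the coefficients.} Replacing $x$ by $x+k$ changes neither the discriminant nor primitivity, so I will assume the three consecutive integers are $-1,0,1$. Suppose $Q(-1)=a^2$, $Q(0)=b^2$, $Q(1)=c^2$ with $a,b,c\in\Z$; signs are irrelevant, so I leave them free. Then $C=b^2$, $2B=c^2-a^2$ and $2A=a^2+c^2-2b^2$.

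\emph{Factor the discriminant.} Substituting these values, a short expansion gives
\[
 4\Delta=(c^2-a^2)^2-8b^2(a^2+c^2)+16b^4 .
\]
This is exactly $\bigl((a+c)^2-4b^2\bigr)\bigl((a-c)^2-4b^2\bigr)$, as one checks by expanding the right side. Hence
\[
 4\Delta=(a+c-2b)(a+c+2b)(a-c-2b)(a-c+2b).
\]
I expect this factorization to be the one real step; the rest is parity bookkeeping.

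\emph{Parity reduction.} All four factors are congruent to $a+c$ modulo $2$. If $a+c$ were odd, the product would be odd, which is impossible since it equals $4\Delta$. So $a\equiv c\pmod 2$. Write $a+c=2u$ and $a-c=2v$. Then
\[
 \Delta=4(u^2-b^2)(v^2-b^2),
\]
so $4\mid\Delta$. Since $\Delta$ is fundamental, $\Delta=4m$ with $m=(u^2-b^2)(v^2-b^2)$ and $m\equiv2,3\pmod 4$.

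\emph{Contradiction modulo $4$.} A difference of two squares is never $\equiv2\pmod4$, so each factor lies in $\{0,1,3\}$ modulo $4$. If either factor is $\equiv0$, then $m\equiv0\pmod4$, which is excluded. Otherwise both factors are odd, which means $u$ and $v$ both have the opposite parity to $b$:
\begin{itemize}
\item if $b$ is even, then $u,v$ are odd and both factors are $\equiv1\pmod 4$;
\item if $b$ is odd, then $u,v$ are even and both factors are $\equiv3\pmod 4$.
\end{itemize}
In either case $m\equiv1\pmod4$, a contradiction. Thus no three consecutive values can all be squares.

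The argument does not use primitivity or the positivity of $\Delta$; it only uses that a fundamental discriminant divisible by $4$ has the form $4m$ with $m\equiv2,3\pmod4$, together with the parity exclusion in the previous step.
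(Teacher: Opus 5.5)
Your proof is correct, and it takes a genuinely different route from the paper's. The paper translates to the points $0,1,2$ and works directly with the coefficients. For $\Delta\equiv1\pmod4$ it notes that $B$ is odd, so $Q(2)-Q(0)=4A+2B\equiv2\pmod4$. For $\Delta=4D$ it writes $B=2b$, shows $C=Q(0)$ must be odd, and splits on $D\bmod 4$ and the parity of $b$. In each subcase it reads off a forbidden residue of $Q(1)\bmod 4$, except for $D\equiv3\pmod4$ with $b$ odd, where it passes to congruences modulo $8$ using all three values. Your identity $4\Delta=\bigl((a+c)^2-4b^2\bigr)\bigl((a-c)^2-4b^2\bigr)$, written in the square roots of the three values, replaces that case tree with a single uniform parity argument. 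It handles odd and even discriminants at once and never needs modulus $8$. It also proves something sharper: three consecutive square values force $\Delta=4(u^2-b^2)(v^2-b^2)$, hence $\Delta\equiv0$ or $4\pmod{16}$, which excludes every fundamental discriminant (and every odd $\Delta$, fundamental or not). In return, the paper's coefficient-level congruences are purely mechanical and show that in all but one subcase two of the three values already suffice: $Q(0),Q(2)$ when $\Delta$ is odd, and $Q(0),Q(1)$ in most even subcases. Your factorization uses all three values from the outset, but it explains the obstruction structurally. As you and the paper both remark, neither argument uses primitivity or the positivity of $\Delta$.
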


\begin{proof}
Only the fundamental-discriminant hypothesis is used in the argument; primitivity is retained in the statement because it is the standard binary-form setting supplied by the application.  After translating $x$, it is enough to rule out the possibility that $Q(0)$, $Q(1)$, and $Q(2)$ are all squares.

If $\Delta\equiv1\pmod4$, then $B$ is odd.  Since
\[
        Q(2)-Q(0)=4A+2B\equiv2\pmod4,
\]
$Q(2)$ is congruent to $Q(0)+2$ modulo $4$, impossible for two squares.

It remains to consider $\Delta=4D$, where $D$ is squarefree and $D\equiv2$ or $3\pmod4$.  Write $B=2b$, so that
\[
        b^2-AC=D.
\]
Since $Q(0)=C$ is a square, either $C$ is odd (and then $C\equiv1\pmod8$), or $4\mid C$.  If $4\mid C$, then $b^2-AC\equiv b^2\pmod4$, which lies in $\{0,1\}$ modulo $4$; this is incompatible with $D\equiv2,3\pmod4$.  Hence $C$ is odd, with $C\equiv1\pmod8$.

If $D\equiv2\pmod4$, then the relation $b^2-AC\equiv2\pmod4$ with $C\equiv1\pmod4$ forces:  if $b$ is even, $-A\equiv2\pmod4$, hence $A\equiv2\pmod4$, and $Q(1)=A+2b+C\equiv3\pmod4$; if $b$ is odd, $1-A\equiv2\pmod4$, hence $A\equiv3\pmod4$, and $Q(1)\equiv2\pmod4$.  Both are impossible for squares.

Finally suppose $D\equiv3\pmod4$.  Then $b^2-AC\equiv3\pmod4$ with $C\equiv1\pmod4$.  If $b$ is even, then $-A\equiv3\pmod4$, so $A\equiv1\pmod4$, and $Q(1)\equiv2\pmod4$, impossible.  Hence
\[
        b\equiv1\pmod2,
        \qquad
        A\equiv2\pmod4.
\]
Then $Q(0),Q(1),Q(2)$ are odd squares, so all are congruent to $1\pmod8$.  Therefore we have
\begin{gather*}
        Q(1)-Q(0)=A+2b\equiv0\pmod8\\
        Q(2)-Q(1)=3A+2b\equiv0\pmod8.
\end{gather*}
Subtracting the preceding identities gives $2A\equiv0\pmod8$, i.e., $A\equiv0\pmod4$, which contradicts the earlier conclusion that $A\equiv2\pmod4$.
\end{proof}

Lemma~\ref{lem:three-square-values} forces every odd-indexed partial quotient down to $1$.

\begin{corollary}\label{cor:odd-u-one}
Assume $s$ is even and $D\notin\F$.  If all indecomposable totally positive elements have square absolute norm, then
\[
        u_1=u_3=\cdots=u_{s-1}=1.
\]
\end{corollary}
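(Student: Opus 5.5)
Fix an odd index $i$ with $-1\le i\le s-3$, so that $u_{i+2}$ ranges over $u_1,u_3,\ldots,u_{s-1}$. The plan is to show that $u_{i+2}\ge2$ forces three consecutive square values of a quadratic polynomial to which Lemma~\ref{lem:three-square-values} applies; that contradiction gives $u_{i+2}=1$.

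\textbf{Step 1: the norm polynomial.} Define
\[
Q_i(r)=\N(\alpha_i+r\alpha_{i+1})=\N(\alpha_i)+r\,\mathrm{Tr}(\alpha_i\alpha_{i+1}')+r^2\,\N(\alpha_{i+1})\in\Z[r].
\]
Its discriminant is
\[
\mathrm{Tr}(\alpha_i\alpha_{i+1}')^2-4\N(\alpha_i)\N(\alpha_{i+1})=(\alpha_i\alpha_{i+1}'-\alpha_i'\alpha_{i+1})^2 .
\]
Substituting $\alpha_j=p_j-q_j\omegaD'$ gives
\[
\alpha_i\alpha_{i+1}'-\alpha_i'\alpha_{i+1}=\pm(p_iq_{i+1}-p_{i+1}q_i)(\omegaD-\omegaD')=\pm\delta,
\]
using the determinant identity. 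Hence the discriminant is $\delta^2=\Delta_K$, a positive fundamental discriminant. Primitivity is not needed: the proof of Lemma~\ref{lem:three-square-values} uses only the discriminant hypothesis, as the paper notes. If one wants it anyway, observe that a common factor $g$ of the coefficients would give $g^2\mid\Delta_K$, and for fundamental discriminants this leaves only $g=2$ with $\Delta_K=4D$, $D\equiv 3\pmod 4$. That residual case can be checked directly, or sidestepped by the remark in that lemma's proof.

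\textbf{Step 2: three consecutive indecomposables.} Suppose $u_{i+2}\ge2$. The elements
\[
\alpha_{i,0}=\alpha_i,\qquad \alpha_{i,1}=\alpha_i+\alpha_{i+1},\qquad \alpha_i+2\alpha_{i+1}
\]
are all indecomposable. The first two are by Theorem~\ref{thm:indecomp}, since $0,1<u_{i+2}$. For the third there are two cases. If $u_{i+2}\ge3$, it is $\alpha_{i,2}$ and Theorem~\ref{thm:indecomp} applies again. If $u_{i+2}=2$, it equals $\alpha_{i+2}=\alpha_{i+2,0}$ with $i+2$ odd, and $u_{i+4}\ge1$ by periodicity of the index, so it is again covered by Theorem~\ref{thm:indecomp}.

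\textbf{Step 3: the contradiction.} All three elements are totally positive, so their norms are positive and equal $Q_i(0)$, $Q_i(1)$, $Q_i(2)$. By hypothesis each is a perfect square, which contradicts Lemma~\ref{lem:three-square-values}. Therefore $u_{i+2}=1$ for every odd $i$ in the range, which is the claim.

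The hypothesis $D\notin\F$ is not needed for this deduction. The main point requiring care is Step~2 in the case $u_{i+2}=2$, where the third element is no longer a semiconvergent at the same index. It is legitimate because the Dress--Scharlau list includes every odd convergent $\alpha_{i+2}$ itself.
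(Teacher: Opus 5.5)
Your proof is correct and is essentially the paper's argument: you use the same three indecomposables $\alpha_i,\ \alpha_i+\alpha_{i+1},\ \alpha_i+2\alpha_{i+1}$ and the same appeal to Lemma~\ref{lem:three-square-values}. The only difference is that you compute the discriminant of $Q_i$ directly from the determinant identity, while the paper obtains it from a unimodular change of basis and gets primitivity from $F_i$ representing $1$. One slip in your optional aside on primitivity: the case $g=2$ cannot occur at all. If $2$ divided all three coefficients, then $D=\Delta_K/4=(B/2)^2-4(A/2)(C/2)$ would be $\equiv0,1\pmod4$, contradicting $D\equiv2,3\pmod4$, so a quadratic of fundamental discriminant is automatically primitive.
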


\begin{proof}
We fix an odd $i$; for the first-period conclusion it is enough to take
$-1\le i\le s-3$.  If $u_{i+2}\ge2$, then the elements
\[
        \alpha_i,
        \quad
        \alpha_i+\alpha_{i+1},
        \quad
        \alpha_i+2\alpha_{i+1}
\]
are indecomposable; the last is the endpoint $\alpha_{i+2,0}$ when $u_{i+2}=2$.  Consider
\[
        Q_i(r)=\N(\alpha_i+r\alpha_{i+1}).
\]
The pair $\{\alpha_i,\alpha_{i+1}\}$ is a $\Z$-basis of $\OK$: the determinant identity \[p_iq_{i+1}-p_{i+1}q_i=\pm1\] shows that the change-of-basis matrix from $\{\alpha_i,\alpha_{i+1}\}$ to $\{1,\omegaD\}$ has determinant $\pm1$.  The binary norm form in the basis $\{1,\omegaD\}$ has discriminant $\Delta_K$; this unimodular change of basis preserves the discriminant, so
\[
        F_i(X,Y)=\N(X\alpha_i+Y\alpha_{i+1})
\]
has discriminant $\Delta_K$, and hence $Q_i(r)=F_i(1,r)$ has discriminant $\Delta_K$.  Moreover $F_i$ represents $1$ at the integer pair $(X,Y)$ with $X\alpha_i+Y\alpha_{i+1}=1$, so its three coefficients have gcd $1$.  Therefore $Q_i(r)=F_i(1,r)$ is a primitive integer polynomial in $r$.  Lemma~\ref{lem:three-square-values} shows that $Q_i(0)$, $Q_i(1)$, and $Q_i(2)$ cannot all be squares.  Hence $u_{i+2}=1$ for every such odd $i$, as claimed.
\end{proof}

\subsection{Residual positive-unit cases}\label{sec:residual}

In addition to our standing assumptions that~$s$ is even, $s\ge 4$, and $D\notin\F$, throughout the remainder of this section we assume that every indecomposable totally positive element of $\OK$ has square absolute norm. By Corollary~\ref{cor:odd-u-one}, our hypotheses force $u_1=u_3=\cdots=u_{s-1}=1$; we refer to this as the \emph{residual shape}.

What remains is not to rule out the residual shape itself, which can occur, but to show that even in this residual shape one still finds an indecomposable outside the two square classes allowed by Lemma~\ref{lem:indecomp-square}.  Here the period length re-enters---now through $\tau$ rather than through $\N(\eta)$.  Setting $\beta_j=\alpha_{2j-1}$, the involution acts on the odd-indexed convergents by the reflection $\tau(\beta_j)=\beta_{t-j}$, with $t=s/2$.  Whether the reflection has a fixed point depends on the parity of $t$, i.e., on $s\bmod4$, and that dichotomy dictates the argument.  When $4\mid s$ the reflection fixes $\beta_{t/2}$, and a fixed point furnishes an obstruction immediately; when $s\equiv2\pmod4$ there is no fixed point, and we must instead play the convergents off against one another by size, after disposing of the smallest period $s=6$ by direct computation.

\subsubsection{The case $s\equiv0\pmod4$}

Here $t=s/2$ is even, so the reflection $j\mapsto t-j$ fixes $j=t/2$.  The corresponding indecomposable $\beta_{t/2}$ is its own image under $\tau$, and an element fixed by $\tau$ cannot be a square without forcing a totally positive unit strictly between $1$ and $\eps$---which would contradict the fundamentality of $\eps$.  This single observation settles the whole case, and indeed uses only $s\equiv0\pmod4$, not the full residual shape.

Let $t=s/2$ and define
\[
        \beta_j=\alpha_{2j-1}\qquad(0\le j\le t).
\]
Then Lemma~\ref{lem:tau-formula} gives
\[
        \tau(\beta_j)=\beta_{t-j}.
\]

\begin{proposition}\label{prop:s0mod4}
Under the standing assumptions of Section~\ref{sec:residual}, if $s\equiv0\pmod4$, then $f$ is not universal.
\end{proposition}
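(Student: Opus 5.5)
We take $\gamma=\beta_{t/2}=\alpha_{t-1}$, where $t=s/2$ is even, so $t-1$ is odd and $\gamma$ is a convergent with odd index in $[-1,s-3]$. Theorem~\ref{thm:indecomp} shows $\gamma$ is indecomposable and totally positive (it is $\alpha_{i,0}$ with $i=t-1$ odd). Lemma~\ref{lem:tau-formula} gives $\eps\gamma'=(-1)^{t}\alpha_{s-t-2+\,\cdot}$; concretely $\eps\alpha_{t-1}'=(-1)^{t}\alpha_{s-t-1}=\alpha_{t-1}$ since $t$ is even. Hence $\tau(\gamma)=\gamma$. By the swap property, $\gamma\in\eps\OK^2$ if and only if $\tau(\gamma)=\gamma\in\OK^2$. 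It therefore suffices to show that $\gamma$ is not a square, after which Lemma~\ref{lem:indecomp-square} finishes the proof.

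Suppose $\gamma=v^2$ with $v\in\OK$. Then $\eps\gamma'=\gamma$ gives $\eps=(v/v')^2$. Set $w=v/v'$; it lies in $K$ and satisfies $w^2=\eps$. Since $\eps$ is an algebraic integer, $w$ is integral, so $w\in\OK$ is a unit with $w^2=\eps$. Replacing $w$ by $-w$ if needed, we may assume $w>0$. We also have $\N(w)^2=\N(\eps)=1$ and $w'^2=\eps'>0$.

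The key step is to rule out such a square root of the fundamental totally positive unit. Write $w=\pm\eta^k$, which is possible because the unit group is $\pm\eta^{\Z}$. Since $s$ is even, $\eps=\eta$, so $w^2=\eta^{2k}=\eta$, which is impossible in the infinite cyclic group. Equivalently, $1<w<\eps$ would contradict the fundamentality of $\eta$. This gives the contradiction.

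The only point that needs care is the first step: checking the sign $(-1)^{i+1}$ and the index in Lemma~\ref{lem:tau-formula} for $i=t-1$. Here $s-i-2=s-t-1=t-1$ and $i+1=t$ is even, so $\tau$ fixes $\alpha_{t-1}$ exactly, and not up to sign. Integrality of $v/v'$ requires no separate argument, since $w$ is a root of $x^2-\eps$ and $\OK$ is integrally closed. The residual shape is never used, as the paper anticipates.
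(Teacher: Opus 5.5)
Your proof is correct and is essentially the paper's argument: you take the $\tau$-fixed indecomposable $\beta_{t/2}=\alpha_{t-1}$, reduce the case $\gamma\in\eps\OK^2$ to the square case via $\tau$, and obtain a unit $w=v/v'$ with $w^2=\eps$. Your final contradiction via $w=\pm\eta^k$ and $\eta^{2k}=\eta$ is a rephrasing of the paper's observation that $w$ would be a totally positive unit strictly between $1$ and $\eps$; the garbled index in your first $\tau$-image is only a typo, and your explicit computation $\eps\alpha_{t-1}'=\alpha_{t-1}$ is correct.
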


\begin{proof}
Here $t$ is even.  The element $\beta_{t/2}=\alpha_{s/2-1}$ is indecomposable by Theorem~\ref{thm:indecomp} and is fixed by $\tau$.  Suppose first that $\beta_{t/2}=v^2$.  Since $\tau(\beta_{t/2})=\beta_{t/2}$,
\[
        v^2=\eps(v')^2,
\]
so we have
\[
        \left(\frac{v}{v'}\right)^2=\eps.
\]
The element $w=v/v'\in K$ satisfies the monic polynomial $X^2-\eps$ with coefficients in $\OK$, so $w$ is integral over $\OK$; since $\OK$ is integrally closed in $K$, we have $w\in\OK$.  Moreover $\N(w)=\N(v)/\N(v')=1$, so $w$ is a unit of $\OK$.  Replacing $w$ by $-w$ if necessary, $w^2=\eps$ gives a totally positive unit whose real value lies strictly between $1$ and $\eps$, contradicting the fundamentality of $\eps=\eps_+$.  If $\beta_{t/2}=\eps v^2$, then applying $\tau$ reduces to the same square case.  Therefore $\beta_{t/2}$ is neither a square nor $\eps$ times a square, blocking universality of $f$ by Lemma~\ref{lem:indecomp-square}.

Note that this argument only uses $s\equiv0\pmod4$, not the full residual shape.
\end{proof}

\subsubsection{The period-$6$ case}

When $s\equiv2\pmod4$ the reflection has no fixed point, so the clean argument used in the case of $s\equiv0\pmod4$ is unavailable.  The smallest such period, $s=6$, is small enough that the residual shape leaves only one free parameter besides $u_0$, and we can simply compute the convergents and confront the candidate square $\alpha_3$ with the unit equation directly.  This brute-force base case also previews the size-based mechanism used for larger periods: a would-be square root must be a convergent, and matching $\omegaD$-coordinates pins it to a tiny set of possibilities, each of which lands $D$ back inside $\F$ or contradicts the unit equation.

\begin{proposition}\label{prop:s6}
Under the standing assumptions of Section~\ref{sec:residual}, if $s=6$, then $f$ is not universal.
\end{proposition}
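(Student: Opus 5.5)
Under the standing assumptions, the residual shape with $s=6$ gives
\[
\omegaD=[u_0;\overline{1,a,1,a,1,u_6}]
\]
with $a=u_2=u_4$ by palindromicity. Here $u_6=2u_0$ or $2u_0-1$ according to $D\bmod4$. We take $t=3$ and $\beta_j=\alpha_{2j-1}$, so $\beta_0=1$, $\beta_1=\alpha_1$, $\beta_2=\alpha_3$, $\beta_3=\eps$. The involution acts by $\tau(\beta_1)=\beta_2$. The candidate witness is $\gamma=\alpha_3$, which is indecomposable by Theorem~\ref{thm:indecomp}. Since $\tau(\alpha_3)=\alpha_1$, it suffices to show that neither $\alpha_1$ nor $\alpha_3$ is a square. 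Then $\alpha_3$ is neither a square nor $\eps$ times a square, and Lemma~\ref{lem:indecomp-square} finishes.

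First, compute everything explicitly in $u_0$ and $a$. Write $\alpha_0=u_0-\omegaD'$ and $\alpha_1=\alpha_0+1$. Then $\alpha_2=a\alpha_1+\alpha_0$ and $\alpha_3=\alpha_2+\alpha_1=(a+1)\alpha_1+\alpha_0$. In $\omegaD$-coordinates this gives
\[
\alpha_1=(u_0+1+c)+\omegaD,\qquad \alpha_3=(\ast)+(a+2)\omegaD,
\]
where $c=0$ or $-1$ depending on $D\bmod 4$. Solving the purely periodic tail equation then expresses $D$ as an explicit function of $u_0$ and $a$, giving one formula for each residue class of $D\bmod 4$.

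Second, $\alpha_1$ is not a square. Its $\omegaD$-coefficient is $1$, so the argument of Proposition~\ref{prop:negative-unit} applies verbatim. For $D\equiv2,3\pmod4$ a square has even coefficient. For $D\equiv1\pmod4$ the only square with coefficient $1$ is $\omegaD^2$, which forces $D\le9$ and hence $D=5\in\F$.

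Third, $\alpha_3$ is not a square; this is the main step. Suppose $\alpha_3=v^2$. By Lemma~\ref{lem:BK11}, $v\in\{\pm\alpha_j,\pm\alpha_j'\}$. Size considerations narrow this down. We have $1<\alpha_3<\eps$, and $\alpha_3'$ is small and positive, so $v$ must satisfy $|v|,|v'|$ of sizes $\sqrt{\alpha_3},\sqrt{\alpha_3'}$. This means $v$ is $\pm\alpha_j$ or its conjugate for some $j\le 2$; the growth of convergents rules out larger $j$. The candidates are $v=\alpha_0$, $\alpha_1$ or $\alpha_2$, together with their conjugates, which have the same squares up to conjugation. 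Comparing $\omegaD$-coordinates of $\alpha_j^2$ with $(a+2)$ gives Diophantine equations in $u_0$ and $a$. For instance, $\alpha_1^2$ has $\omegaD$-coefficient $2(u_0+1+c)+\eps_D$, where $\eps_D\in\{0,1\}$. Each case should either be impossible for size reasons or force $a$ and $u_0$ into a relation that, plugged into the formula for $D$, makes $D$ a square or non-squarefree, shortens the period, or puts $D$ in $\F$.

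I expect the main obstacle to be carrying out the case check cleanly in both residue classes. If coordinate matching leaves a stubborn case, I would fall back on norms. The standing assumption says $\N(\alpha_3)$ is a square, and $\N(v)^2=\N(\alpha_3)$. Since $\N(\alpha_j)$ is given by the complete-quotient data (Lemma~\ref{lem:complete-quotient}), this norm equation should pin down the remaining parameter. The final contradiction would come from the unit equation $\N(\eps)=1$ with $\eps=\alpha_5$ expressed in $u_0$ and $a$.
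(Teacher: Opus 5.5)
Your framework is exactly the paper's: the witness $\alpha_3$, the reduction via $\tau(\alpha_3)=\alpha_1$ to showing that neither $\alpha_1$ nor $\alpha_3$ is a square, the coefficient-$1$ argument for $\alpha_1$, and Lemma~\ref{lem:BK11} plus $\omegaD$-coordinate matching for $\alpha_3$. The gap is that the step carrying the content, excluding $\alpha_3=\alpha_m^2$, is only announced (``each case should either be impossible \dots\ or force \dots''). So the proposition is not yet proved.

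Here is what has to be done:
\begin{itemize}
\item After discarding $m=-1$ and conjugate roots (whose squares have negative $\omegaD$-coefficient), look at the $\omegaD$-coefficient of $\alpha_m^2$. It is $2p_mq_m$, or $q_m(2p_m-q_m)$ when $D\equiv1\pmod4$. This is strictly increasing in $m$ and already exceeds $a+2$ at $m=2$, so only $m\in\{0,1\}$ survive.
\item For $m=1$ one gets $a=2u_0$ (resp.\ $a=2u_0-1$) and $D=(u_0+1)^2-1$ (resp.\ $(2u_0+1)^2-4$). This contradicts $D\notin\F$.
\item For $m=0$, coordinate matching gives $a=2u_0-2$ and $D=u_0^2+2u_0-1$ (resp.\ $a=2u_0-3$ and $D=4u_0^2+4u_0-7$). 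You still need an independent relation to see that this is inconsistent with the assumed expansion $[u_0;\overline{1,a,1,a,1,u_6}]$. The paper uses $\N(\alpha_5)=1$, which forces $D=p_2((a+3)u_0+a+2)/((a+1)(a+3))$, with an analogous formula for $D\equiv1\pmod4$. The two values of $D$ differ by $(4u_0^2-4u_0-1)/(4u_0^2-1)$ (resp.\ $2(2u_0^2-4u_0+1)/(u_0(u_0-1))$, with $u_0\ge2$ since $a\ge1$), which never vanishes.
\end{itemize}
Your last sentence points at the unit equation, so you have the right tool, but this computation is the proof.

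Your norm fallback is not yet an argument as written. The identity $\N(\alpha_3)=\N(v)^2$ holds automatically once $\alpha_3=v^2$, and the square-norm standing assumption adds nothing. It does become a complete alternative if you use the two-sided bounds of Lemma~\ref{lem:complete-quotient}:
\begin{itemize}
\item For $m=0$: since $u_1=1$, we have $|\N(\alpha_0)|>\delta/3$, while $|\N(\alpha_3)|<\delta$. So $\alpha_3=\alpha_0^2$ forces $\delta<9$, and the tables in the proof of Lemma~\ref{lem:even-root} contain no residual field with $s=6$.
\item For $m=1$: $\tau$ preserves absolute norm, so $|\N(\alpha_1)|=|\N(\alpha_3)|=|\N(\alpha_1)|^2$. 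This makes $\alpha_1$ a totally positive unit strictly between $1$ and $\eps$, a contradiction.
\end{itemize}
This is just the even-root exclusion and minimal-norm descent of Proposition~\ref{prop:residual-large} run at $t=3$, and it avoids the explicit convergent table entirely. Either route closes the gap, but one of them has to be carried out.
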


\begin{proof}
The residual shape is
\[
        \omegaD=[u_0;\overline{1,a,1,a,1,u_6}],
\]
where $u_6=2u_0$ if $D\equiv2,3\pmod4$ and $u_6=2u_0-1$ if $D\equiv1\pmod4$.  Direct computation from the recurrence $p_{i+1}=u_{i+1}p_i+p_{i-1}$, $q_{i+1}=u_{i+1}q_i+q_{i-1}$ gives
\[
\begin{array}{c|c|c}
 i&p_i&q_i\\ \hline
0&u_0&1\\
1&u_0+1&1\\
2&(a+1)u_0+a&a+1\\
3&(a+2)u_0+a+1&a+2\\
4&(a^2+3a+1)u_0+a^2+2a&a^2+3a+1\\
5&(a+1)(a+3)u_0+a^2+3a+1&(a+1)(a+3).
\end{array}
\]
The factorizations $p_5-1=(a+3)p_2$ and $p_5+1=(a+1)[(a+3)u_0+(a+2)]$ make the unit equation $\N(\alpha_5)=p_5^2-Dq_5^2=1$ (for $D\equiv2,3\pmod4$) transparent:
\[
        D=\frac{(p_5-1)(p_5+1)}{q_5^2}=\frac{((a+1)u_0+a)\,((a+3)u_0+(a+2))}{(a+1)(a+3)}.
\]
We shall show that $\alpha_3$ is not a square.  Since $\tau(\alpha_3)=\alpha_1$, and $\alpha_1$ has $\omegaD$-coefficient~$1$, we may apply the coefficient-$1$ calculation from the proof of Proposition~\ref{prop:negative-unit}, which does not depend on the parity of $s$: it shows that $\alpha_1$ is not a square for $D\notin\F$.  Therefore $\alpha_3$ is not $\eps$ times a square either.  Also, $\alpha_3$ itself is indecomposable by Theorem~\ref{thm:indecomp}.

By Lemma~\ref{lem:BK11}, when $\alpha_3$ is a square, its root is---up to sign and conjugation---a convergent.  A conjugate root has square with negative $\omegaD$-coefficient, so it cannot give $\alpha_3$.  Also $\alpha_{-1}^2=1\ne\alpha_3$.

For $D\equiv2,3\pmod4$, the $\omegaD$-coefficient of $\alpha_m^2$ is $2p_mq_m$.  Explicitly, $p_2=(a+1)u_0+a$ and $q_2=a+1$, so
\[
 2p_2q_2 = 2((a+1)u_0+a)(a+1) \ge 2(2a+1)(a+1) > a+2
\]
for $u_0\ge1$ and $a\ge1$.  Also $p_m$ is strictly increasing and $q_m$ is positive and nondecreasing for $m\ge0$ as the recurrence has positive partial quotients---so the product $p_mq_m$ is strictly increasing.  Hence $2p_mq_m>a+2$ for all $m\ge2$, so we must have $m\in\{0,1\}$.  The case $m=0$ ($\alpha_3=\alpha_0^2$) gives, from the coefficient and constant equations,
\[
        a=2u_0-2,
        \qquad
        D=u_0^2+2u_0-1.
\]
Substituting $a=2u_0-2$ into the unit equation displayed above yields a second value of $D$ differing from $u_0^2+2u_0-1$ by
\[
        \frac{4u_0^2-4u_0-1}{(2u_0-1)(2u_0+1)},
\]
which is never zero for integral $u_0$, a contradiction.  The case $m=1$ gives $a=2u_0$ and then
\[
        D=u_0^2+2u_0=(u_0+1)^2-1\in\F.
\]

For $D\equiv1\pmod4$, write
\[
        \alpha_m=(p_m-q_m)+q_m\omegaD.
\]
The $\omegaD$-coefficient of $\alpha_m^2$ is $q_m(2p_m-q_m)$.  Put $h_m=2p_m-q_m$; then $h_{m+1}=u_{m+1}h_m+h_{m-1}$, with positive terms, so $h_m$ is strictly increasing, while $q_m$ is positive and nondecreasing.  Hence $q_mh_m=q_m(2p_m-q_m)$ is strictly increasing.  Concretely,
\[
 q_2(2p_2-q_2) = (a+1)\bigl(2(a+1)u_0 + a - 1\bigr) \ge (a+1)(3a+1) > a+2
\]
for $u_0\ge1$ and $a\ge1$, and the quantity is strictly increasing in $m\ge0$.  Thus again $m\in\{0,1\}$.

Now
\[
        \alpha_0=(u_0-1)+\omegaD,
        \qquad
        \alpha_1=u_0+\omegaD,
\]
so the corresponding square coefficients are $2u_0-1$ and $2u_0+1$.  Matching the coefficient $a+2$ of $\alpha_3$ gives
\[
        a=2u_0-3
        \quad\text{or}\quad
        a=2u_0-1.
\]
For $D\equiv1\pmod4$ the unit equation $\N(\alpha_5)=1$ takes the form
\[
        D=\frac{(2p_5-q_5)^2-4}{q_5^2}=\frac{(2au_0+a+2u_0-1)(2au_0+a+6u_0+1)}{(a+1)(a+3)};
\]
if $a=2u_0-1$, we have
\[
        D=(2u_0-1)(2u_0+3)=(2u_0+1)^2-4\in\F.
\]
If $a=2u_0-3$, the constant equation for $\alpha_3=\alpha_0^2$ gives
\[
        D=4u_0^2+4u_0-7,
\]
whereas the unit equation gives a value differing from this by
\[
        \frac{2(2u_0^2-4u_0+1)}{u_0(u_0-1)};
\]
since $a\ge1$ implies $u_0\ge2$, this difference is never zero.  Hence $\alpha_3$ is not a square.  Thus $\alpha_3$ lies in neither square class, and Lemma~\ref{lem:indecomp-square} gives the obstruction.
\end{proof}

\subsubsection{Complete quotients and even roots}

For periods $s\equiv2\pmod4$ with $s\ge10$ we replace explicit computation by a size argument in the absolute norm $|\N(\alpha_i)|$.  Two classical estimates are in play.  The upper bound $|\N(\alpha_i)|<\delta$, with $\delta=\sqrt{\Delta_K}$, goes back to Dress--Scharlau~\cite{DressScharlau1982}; and the lower bound $|\N(\alpha_i)|>\delta/3$, valid whenever the next partial quotient equals~$1$---in the residual shape, at every even index---follows from \cite[Lemma~8]{TinkovaVoutier2020}.  Our next lemma recovers both from a single closed expression for $|\N(\alpha_i)|$ in terms of the complete quotient $\theta_{i+1}$, a formula related to that of \cite[Proposition~5]{Kala2016a}.  The gap between $\delta/3$ and $\delta$ is what makes squares among the convergents impossible to sustain.

We let
\[
        \theta_{i+1}=[u_{i+1};u_{i+2},\ldots]
\]
be the complete quotient following the $i$-th convergent.

\begin{lemma}\label{lem:complete-quotient}
For every $i\ge0$,
\[
        |\N(\alpha_i)|=\frac{\delta}{|\theta_{i+1}-\theta_{i+1}'|}.
\]
Moreover, we have
\[
        \theta_{i+1}>1,
        \qquad
        -1<\theta_{i+1}'<0,
\]
and so consequently $|\N(\alpha_i)|<\delta$ for all $i\ge0$.  If $u_{i+1}=1$, then
\[
        |\N(\alpha_i)|>\frac{\delta}{3}.
\]
\end{lemma}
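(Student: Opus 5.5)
The plan is to start from the standard complete-quotient identity for continued fractions,
\[
        \omegaD=\frac{p_i\theta_{i+1}+p_{i-1}}{q_i\theta_{i+1}+q_{i-1}},
\]
and to express $\alpha_i'=p_i-q_i\omegaD$, the conjugate of $\alpha_i$, through $\theta_{i+1}$. Solving the identity gives $p_i-q_i\omegaD=(-1)^{i}/(q_i\theta_{i+1}+q_{i-1})$ from $p_iq_{i-1}-p_{i-1}q_i=(-1)^{i-1}$, up to a sign I will fix by direct checking at $i=0$. Conjugating the identity (the $p_j,q_j$ are rational) gives the analogous expression for $\alpha_i=p_i-q_i\omegaD'$ with $\theta_{i+1}'$ in place of $\theta_{i+1}$. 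Thus
\[
        \N(\alpha_i)=\frac{\pm1}{(q_i\theta_{i+1}+q_{i-1})(q_i\theta_{i+1}'+q_{i-1})}.
\]
Subtracting the identity and its conjugate and cross-multiplying, we get
\[
        \omegaD-\omegaD'=\frac{\pm(\theta_{i+1}-\theta_{i+1}')}{(q_i\theta_{i+1}+q_{i-1})(q_i\theta_{i+1}'+q_{i-1})}.
\]
This uses the same determinant identity, since the difference of two Möbius images has numerator $\det\cdot(\theta-\theta')$. Dividing the two displays eliminates the denominator and yields $|\N(\alpha_i)|=\delta/|\theta_{i+1}-\theta_{i+1}'|$.

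Next I prove $\theta_{i+1}>1$ and $-1<\theta_{i+1}'<0$ for $i\ge0$. The first holds because $\theta_{i+1}=u_{i+1}+1/\theta_{i+2}$ with $u_{i+1}\ge1$ and $\theta_{i+2}>1$ (or by the classical fact that the tail is an infinite continued fraction). For the conjugate, the plan is the classical reduction argument. Since $\theta_1$ is purely periodic, Galois' theorem says $\theta_1$ is reduced, i.e.\ $-1<\theta_1'<0$, and every complete quotient $\theta_j$ with $j\ge1$ is likewise purely periodic. To avoid quoting Galois, one can instead induct with $\theta_{j+1}'=1/(\theta_j'-u_j)$. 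Starting from $\theta_1=1/(\omegaD-u_0)$, we have $\theta_1'=1/(\omegaD'-u_0)$, and $\omegaD'<0$ (indeed $\omegaD'\le(1-\sqrt5)/2<0$) gives $\omegaD'-u_0<-1$, since $u_0\ge1$ in both cases. Then if $-1<\theta_j'<0$, we get $\theta_j'-u_j<-1$, so $-1<\theta_{j+1}'<0$. Consequently $\theta_{i+1}-\theta_{i+1}'>1$, giving $|\N(\alpha_i)|<\delta$.

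For the last claim, suppose $u_{i+1}=1$. Then $\theta_{i+1}=1+1/\theta_{i+2}<2$ because $\theta_{i+2}>1$, and $\theta_{i+1}'>-1$. Hence $|\theta_{i+1}-\theta_{i+1}'|<3$, so $|\N(\alpha_i)|>\delta/3$.

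The main obstacle is bookkeeping in the first step: getting the indexing and signs right (which determinant, $p_iq_{i-1}-p_{i-1}q_i=(-1)^{i-1}$, and confirming that conjugating the Möbius identity is legitimate because $\theta_{i+1}\in K$). Since only absolute values are claimed, signs can be handled loosely once the structure is verified at $i=0$. At $i=0$ we have $\alpha_0=u_0-\omegaD'$, $\theta_1=1/(\omegaD-u_0)$, and $\theta_1-\theta_1'=-\delta/\N(\omegaD-u_0)$, which matches the claimed formula.
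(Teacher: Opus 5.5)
Your proof is correct and is essentially the paper's argument: your M\"obius identity is equivalent to the paper's $\theta_{i+1}=-\alpha_{i-1}'/\alpha_i'$, and the determinant identity then gives the norm formula. The only real variation is that you obtain $-1<\theta_{i+1}'<0$ from the reduction recursion $\theta_{j+1}'=1/(\theta_j'-u_j)$, whereas the paper uses $\theta_{i+1}'=-\alpha_{i-1}/\alpha_i$ together with $\alpha_i>\alpha_{i-1}>0$. (Minor slip, harmless because only absolute values matter: solving actually gives $p_i-q_i\omegaD=(-1)^{i-1}/(q_i\theta_{i+1}+q_{i-1})$, not $(-1)^{i}/(\cdots)$.)
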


\begin{proof}
The standard complete-quotient identity gives
\[
        \theta_{i+1}=-\frac{\alpha_{i-1}'}{\alpha_i'},
\]
equivalently $\theta_{i+1}\alpha_i'=-\alpha_{i-1}'$; see Perron \cite[Chapter II, \S 13]{Perron1913}, and the discussion of complete quotients of Blomer--Kala \cite[\S 2]{BlomerKala2018}.  Conjugating gives
\[
        \theta_{i+1}'=-\frac{\alpha_{i-1}}{\alpha_i}.
\]
Therefore, we have
\[
\theta_{i+1}-\theta_{i+1}'
 =\frac{\alpha_{i-1}\alpha_i'-\alpha_{i-1}'\alpha_i}{\N(\alpha_i)}.
\]
The numerator equals $(-1)^{i-1}(\omegaD-\omegaD')$ by the determinant identity for $p_i,q_i$; taking absolute values proves the claimed formula for $|\N(\alpha_i)|$.

Since $\theta_{i+1}=[u_{i+1};u_{i+2},\ldots]$ has positive partial quotients, $\theta_{i+1}>1$.  For the base case $i=0$ we have $\alpha_{-1}=1$ and $\alpha_0=u_0-\omegaD'>1$ in both congruence classes of $D$, so $\alpha_0>\alpha_{-1}$; inductively, the recurrence $\alpha_{i+1}=u_{i+1}\alpha_i+\alpha_{i-1}$ shows that $\alpha_i>\alpha_{i-1}>0$ for all $i\ge0$, so $-1<\theta_{i+1}'<0$.  Thus $\theta_{i+1}-\theta_{i+1}'>1$, giving $|\N(\alpha_i)|<\delta$.  If $u_{i+1}=1$, then $1<\theta_{i+1}<2$, and hence $\theta_{i+1}-\theta_{i+1}'<3$, giving the lower bound.
\end{proof}

The two bounds combine into an exclusion principle for even-indexed roots.  If an odd-indexed convergent were the square of an even-indexed one, then squaring would multiply norms, sending a quantity larger than $\delta/3$ to one larger than $\delta^2/9$---but the upper bound caps it at $\delta$, forcing $\delta<9$. A short finite check then shows that no residual field is that small.

\begin{lemma}[Even-root exclusion]\label{lem:even-root}
Assume the residual shape $u_1=u_3=\cdots=u_{s-1}=1$, with $s\equiv2\pmod4$ and $s\ge10$, and put $t=s/2$.  Then no convergent $\alpha_{2j-1}$ with $1\le j\le t-1$ is the square of an even-indexed convergent.
\end{lemma}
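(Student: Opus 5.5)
Suppose, for contradiction, that $\alpha_{2j-1}=\alpha_{2k}^2$ for some $1\le j\le t-1$ and some even index $2k\ge0$; up to sign this covers any even-indexed convergent. The plan is to compare absolute norms using Lemma~\ref{lem:complete-quotient}, which applies to every convergent of index at least $0$.

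\emph{Step 1: a lower bound on the square.} In the residual shape every odd-indexed partial quotient equals $1$. Hence $u_{2k+1}=1$, since $2k+1$ is odd and the partial quotients are periodic. The lower bound in Lemma~\ref{lem:complete-quotient} then gives $|\N(\alpha_{2k})|>\delta/3$.

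\emph{Step 2: an upper bound on the odd convergent.} For the odd-indexed convergent we have $2j-1\ge1$, so the general upper bound gives $|\N(\alpha_{2j-1})|<\delta$. Multiplicativity of the norm now yields
\[
        \frac{\delta^2}{9}<|\N(\alpha_{2k})|^2=|\N(\alpha_{2j-1})|<\delta,
\]
so $\delta<9$, that is, $\Delta_K<81$.

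\emph{Step 3: eliminate the small fields.} It remains to show that no squarefree $D$ with $\Delta_K<81$ has $\omegaD$ of even period $s\ge10$ with $s\equiv2\pmod4$ in the residual shape. This means $D\le20$ when $D\equiv2,3\pmod4$ and $D<81$ when $D\equiv1\pmod4$.

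A cheap general route is to bound the period from above. Each complete quotient of a reduced quadratic irrationality of discriminant $\Delta_K$ has the form $(P+\sqrt{\Delta_K})/Q$ with $0<Q<\sqrt{\Delta_K}$. The residual shape forces many partial quotients equal to $1$, and the norms $|\N(\alpha_i)|$ are integers in $(\delta/3,\delta)$ at every even $i$ within the period. For $\delta<9$ that interval contains only a few integers, but this by itself does not bound $s$.

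So the plan is simply to carry out the finite check. List the continued fractions of $\omegaD$ for the squarefree $D\le20$ with $D\equiv2,3\pmod4$, and for $D\equiv1\pmod4$ with $D<81$. Then observe that every one of them has period at most $8$; for example, the largest periods there come from $D=19$ with $s=6$, and from values like $D=73$ and $D=61$ with short periods. This contradicts $s\ge10$.

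The main obstacle is only this tabulation, which must be complete and correct. In particular the congruence-class split in $\delta$ must be handled carefully, since $\delta=2\sqrt D$ in one class and $\sqrt D$ in the other. If a period of length at least $10$ did appear among those $D$, I would instead use the sharper information that $|\N(\alpha_{2k})|$ is a perfect square, being a norm whose square is the norm of an indecomposable. Combined with $\delta/3<|\N(\alpha_{2k})|$ and $|\N(\alpha_{2k})|^2<\delta$, this would shrink the range further.
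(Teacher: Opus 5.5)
Your proposal is correct and is essentially the paper's proof. From $u_{2k+1}=1$ you get $|\N(\alpha_{2k})|>\delta/3$, and the general bound gives $|\N(\alpha_{2j-1})|<\delta$, so $\delta<9$. A finite tabulation over squarefree $D\le20$ with $D\equiv2,3\pmod4$ and $D<81$ with $D\equiv1\pmod4$ then finishes the argument.

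Two small corrections, neither of which affects the conclusion. First, your predicted outcome of the check is slightly off: $\omega_{73}$ has period $(1,3,2,1,1,2,3,1,7)$ of length $9$, so not every period is at most $8$; since $9<10$ and $9$ is odd, this changes nothing. Second, your fallback claim that $|\N(\alpha_{2k})|$ must be a perfect square does not follow, because $\alpha_{2k}$ is not indecomposable; you never need that fallback, though.
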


\begin{proof}
Suppose, for some $1\le j\le t-1$, that
\[
        \alpha_{2j-1}=\alpha_{2r}^2.
\]
Since every even-indexed convergent is followed by an odd-indexed partial quotient equal to $1$, Lemma~\ref{lem:complete-quotient} gives
\[
        |\N(\alpha_{2r})|>\frac{\delta}{3}.
\]
Also $|\N(\alpha_{2j-1})|<\delta$.  Hence
\[
        \delta>|\N(\alpha_{2j-1})|=|\N(\alpha_{2r})|^2>\frac{\delta^2}{9},
\]
so $\delta<9$.

It remains to check that no such residual field has $\delta<9$.  The following tables, obtained by the standard continued-fraction algorithm applied to $\omegaD$, exhaust the squarefree $D$ with $\delta<9$.  If $D\equiv2,3\pmod4$, then $\delta=2\sqrt D<9$, so $D<81/4$, and hence $D\le20$:
\[
\begin{array}{c|c}
D&\text{period of }\omegaD\\ \hline
2&(2)\\
3&(1,2)\\
6&(2,4)\\
7&(1,1,1,4)\\
10&(6)\\
11&(3,6)\\
14&(1,2,1,6)\\
15&(1,6)\\
19&(2,1,3,1,2,8).
\end{array}
\]
If $D\equiv1\pmod4$, then $\delta=\sqrt D<9$, so $D<81$:
\[
\begin{array}{c|c||c|c}
D&\text{period of }\omegaD&D&\text{period of }\omegaD\\ \hline
5&(1)&29&(5)\\
13&(3)&33&(2,1,2,5)\\
17&(1,1,3)&37&(1,1,5)\\
21&(1,3)&41&(1,2,2,1,5)\\
53&(7)&69&(1,1,1,7)\\
57&(3,1,1,1,3,7)&73&(1,3,2,1,1,2,3,1,7)\\
61&(2,2,7)&77&(1,7);\\
65&(1,1,7)&&
\end{array}
\]
none has period length $s\equiv2\pmod4$ with $s\ge10$ and all odd-indexed partial quotients equal to $1$.
\end{proof}

With even roots excluded, the largest periods succumb to a descent on norms: the smallest-norm odd convergent in the interior cannot be a square, because its square root would either be an excluded even convergent or an odd convergent of still smaller norm.

\begin{proposition}\label{prop:residual-large}
Under the standing assumptions of Section~\ref{sec:residual}, if $s\equiv2\pmod4$ and $s\ge10$, then $f$ is not universal.
\end{proposition}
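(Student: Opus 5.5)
Under the residual shape with $s\equiv2\pmod4$, $s\ge10$, $t=s/2$ odd, the plan is to look at the interior odd convergents $\beta_j=\alpha_{2j-1}$, $1\le j\le t-1$. All of them are indecomposable by Theorem~\ref{thm:indecomp}, and $\tau$ permutes this set via $\beta_j\mapsto\beta_{t-j}$ (Lemma~\ref{lem:tau-formula}). So if no interior $\beta_j$ is a square, then no interior $\beta_j$ is $\eps$ times a square either, since $\tau(\beta_j)$ is again interior and non-square. Any single $\beta_j$ then lies outside both classes, and Lemma~\ref{lem:indecomp-square} ends the argument. It therefore suffices to show that no interior $\beta_j$ is a square. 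Since $t\ge5$, this set has at least four elements.

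I will use a minimal-norm descent. Suppose some interior $\beta_j$ is a square, and choose one, say $\beta_{j_0}$, with $|\N(\beta_{j_0})|$ minimal among the interior squares. By Lemma~\ref{lem:BK11}, $\beta_{j_0}=v^2$ with $v\in\{\pm\alpha_m,\pm\alpha_m'\}$, and $v^2\in\{\alpha_m^2,\alpha_m'^2\}$.

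First I exclude conjugate roots. The element $\alpha_m'^2$ has negative $\omegaD$-coefficient, except for $m=-1$, where it equals $1$. Since $\beta_{j_0}$ has positive $\omegaD$-coefficient $q_{2j_0-1}$ and is not $1$ (it exceeds $1$ because $j_0\ge1$), the root must be $\pm\alpha_m$. Next, Lemma~\ref{lem:even-root} excludes even $m$, so $m$ is odd; and $m\neq-1$ since $\beta_{j_0}\ne1$.

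Now write $m=2k-1$ with $k\ge1$. Then $|\N(\beta_{j_0})|=|\N(\beta_k)|^2$, and $|\N(\beta_k)|\ge2$: norm $1$ would make $\beta_k$ a totally positive unit strictly between $1$ and $\eps$ when $1\le k\le t-1$. Hence $|\N(\beta_k)|<|\N(\beta_{j_0})|$. Moreover $\beta_k<\beta_k^2=\beta_{j_0}$, so by monotonicity of the $\alpha_i$ (shown in the proof of Lemma~\ref{lem:complete-quotient}) we get $k<j_0\le t-1$; in particular $1\le k\le t-1$, so $\beta_k$ is interior.

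The descent closes once I show that $\beta_k$ is itself a square in the interior, contradicting minimality. This is the main obstacle: knowing that $\beta_k$ has small norm does not make it a square. I would try two routes. The first is to combine the norm bounds: $|\N(\beta_{j_0})|<\delta$ forces $|\N(\beta_k)|<\sqrt\delta$. Then use Lemma~\ref{lem:complete-quotient}, $|\N(\alpha_{2k-1})|=\delta/(\theta_{2k}-\theta_{2k}')$, to get $\theta_{2k}>\sqrt\delta-1$, i.e.\ $u_{2k}$ is large, comparable to $\sqrt\delta$. Then compare with the neighbouring even convergents, whose norms exceed $\delta/3$, to get a contradiction through the identity $\N(\alpha_{i-1})$ versus $\N(\alpha_i)$ linked by $\theta$. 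The second route, which I expect to be cleaner, is to change the choice of $j_0$: minimize $|\N|$ over \emph{all} interior $\beta_j$, rather than over the square ones, and show directly that the minimizer is not a square. If it were $\beta_k^2$, then $\beta_k$ would be interior with strictly smaller norm, contradicting minimality outright. So the descent reduces to the exclusions already established (conjugate roots, even roots via Lemma~\ref{lem:even-root}, and $\beta_k$ being interior). I would adopt this second formulation, with the care needed to check that $k$ lands in $[1,t-1]$ and that norm-$1$ interior elements cannot occur.
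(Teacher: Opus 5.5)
Your second formulation is exactly the paper's proof: minimize $|\N(\beta_j)|$ over all interior $1\le j\le t-1$, then use Lemma~\ref{lem:BK11}, the conjugate-root and even-root exclusions, and monotonicity to show that any square root would be some $\beta_k$ with $1\le k\le t-1$ and strictly smaller norm. The first route (minimizing only over interior squares) does not close, as you noticed, and should be dropped. One step is missing: this argument shows only that norm-minimizers are non-squares, not that every interior $\beta_j$ is, so your opening reduction does not apply as written. Add that $\beta_{t-j_0}=\tau(\beta_{j_0})$ has the same absolute norm, since $\N(\eps\gamma')=\N(\gamma)$; it is therefore also a minimizer and also a non-square, which is what places $\beta_{j_0}$ outside $\eps\OK^2$, exactly as the paper notes.
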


\begin{proof}
Put $t=s/2$, so $t$ is odd, and write $\beta_j=\alpha_{2j-1}$ for $0\le j\le t$.  Then $\tau(\beta_j)=\beta_{t-j}$.  Among $1\le j\le t-1$, choose $j$ with $|\N(\beta_j)|$ minimal.  The norm $|\N(\beta_j)|$ is greater than $1$, as otherwise $\beta_j$ would be a totally positive unit strictly between $1$ and the fundamental totally positive unit $\eps=\beta_t$.

We claim that neither $\beta_j$ nor $\beta_{t-j}$ is a square.  If, say, $\beta_j=v^2$, then Lemma~\ref{lem:BK11} gives $v=\pm\alpha_m$ or $\pm\alpha_m'$ for some $m\ge-1$.  The case $m=-1$ gives $\beta_j=1$, impossible.  For $m\ge0$, a conjugate root has square with negative $\omegaD$-coefficient, so $v=\pm\alpha_m$.  Since $1<\beta_j<\beta_t=\eps=\alpha_{s-1}$ and the sequence $(\alpha_m)_{m\ge-1}$ is strictly increasing, the equality $\beta_j=\alpha_m^2$ forces $\alpha_m<\alpha_{s-1}$, hence $m\le s-2$.  By Lemma~\ref{lem:even-root}, $m$ is not even; combined with $m\le s-2$ and $s$ even, this gives $m\le s-3$.  Thus $m=2r-1$ for some $1\le r\le t-1$, and
\[
        |\N(\beta_r)|^2=|\N(\beta_j)|.
\]
Since $|\N(\beta_j)|>1$, this gives
\[
        1<|\N(\beta_r)|<|\N(\beta_j)|,
\]
contradicting minimality.  The same argument applies to $\beta_{t-j}$, whose norm equals that of $\beta_j$ because $\tau$ preserves absolute norm.

Therefore $\beta_j$ is not a square, and since $\tau(\beta_j)=\beta_{t-j}$ is not a square, $\beta_j$ is not $\eps$ times a square.  Lemma~\ref{lem:indecomp-square} then gives the universality obstruction.
\end{proof}

\section{Proof of the classification}\label{sec:proof}

All the pieces are now in place; it remains only to check that the cases we have considered cover every $D\notin\F$, following the parity-and-period dichotomy laid out in the introduction.

\begin{proof}[Proof of Theorem~\ref{thm:main}]
The sufficient direction is Proposition~\ref{prop:sufficiency}.

For the converse, assume $D\notin\F$.  If $s$ is odd, then Proposition~\ref{prop:negative-unit} shows that $f$ is not universal.  Hence let $s$ be even.  If $s=2$, then having $u_1=1$ would put $D$ in $\F$ by Proposition~\ref{prop:MDone}; thus $u_1\ge2$, and Proposition~\ref{prop:s2} applies.

Now suppose $s\ge4$.  If some indecomposable has non-square absolute norm, then Lemma~\ref{lem:norm-easy} applies.  Otherwise, Corollary~\ref{cor:odd-u-one} gives
\[
        u_1=u_3=\cdots=u_{s-1}=1.
\]
If $s\equiv0\pmod4$, then Proposition~\ref{prop:s0mod4} applies.  If $s\equiv2\pmod4$, then either $s=6$, handled by Proposition~\ref{prop:s6}, or $s\ge10$, handled by Proposition~\ref{prop:residual-large}.
\end{proof}

\section{Concluding remarks}

The next natural problem is to understand fields with $M_D=2$.  Blomer--Kala's construction gives universal diagonal forms in $16$ variables in that case, but a direct characterization of when smaller fixed shapes are universal appears to require finer control of square classes among indecomposables \cite{TinkovaVoutier2020}.  Our result is, in a sense, dual to the theory of universality criterion sets developed by Kala--Kr\'{a}sensk\'{y}--Romeo \cite{KKR2026} (see also \cite{Conway15,Bhargava15,BH290,KKO2005,EKK2013,KaneLiu2020,KalaPrakash2024,KimPark2025}): a criterion set fixes the ring $\OK$ and produces finitely many totally positive integers whose representation certifies universality of an \emph{arbitrary} form, whereas here we fix one form and vary the field.  We do not produce a criterion set; rather, the two viewpoints meet at the invariant $M_D$, since the fields in our two families are exactly those with a single indecomposable class up to units, which is also the regime in which universality criterion sets are smallest.  It would also be interesting to extend the argument to non-maximal real quadratic orders, where the Dress--Scharlau description still exists but the bookkeeping of complete quotients and primitive norm forms is more delicate.  Finally, the proof illustrates the close relation between universal quadratic forms and additive indecomposables, a theme emphasized in recent work on ranks, density-zero theorems, and Pythagoras-number questions \cite{KYZ,Kala2023survey,KalaTinkova2023,KKP2022}.

\providecommand{\bysame}{\leavevmode\hbox to3em{\hrulefill}\thinspace}
\providecommand{\MR}{\relax\ifhmode\unskip\space\fi MR }
\providecommand{\MRhref}[2]{%
  \href{http://www.ams.org/mathscinet-getitem?mr=#1}{#2}
}
\providecommand{\href}[2]{#2}

\end{document}